\newtheorem{theorem}{Theorem}[section]
\newtheorem{proposition}[theorem]{Proposition}
\newtheorem{corollary}[theorem]{Corollary}
\newtheorem{remark}[theorem]{Remark}
\numberwithin{equation}{section}
\title{\textbf{On the $\psi$-mixing coefficients of R\'enyi-type maps}}
\author[1]{Gabriela Ileana Sebe \thanks{{igsebe@yahoo.com}}}
\affil[1]{Politehnica University of Bucharest, Faculty of Applied Sciences, Splaiul Independentei 313, 060042 Bucharest, Romania}
\affil[1]{Gheorghe Mihoc-Caius Iacob Institute of Mathematical Statistics and Applied Mathematics of the Romanian Academy, Calea 13 Sept.13, 050711 Bucharest, Romania}
\author[2]{Dan Lascu \thanks{{lascudan@gmail.com}}}
\affil[2]{Mircea cel Batran Naval Academy, 1 Fulgerului, 900218 Constanta, Romania}
\begin{document}
\date{}
\maketitle

\noindent \textbf{Abstract}

\noindent
Via dependence with complete connections we investigate the $\psi$-mixing coefficients of the sequence $(a_n)_{n \in \mathbb{N}}$ of incomplete quotients and also of the doubly infinite sequence $(\overline{a}_l)_{l \in \mathbb{Z}}$ of extended incomplete quotients of the R\'enyi-type continued fraction expansions. A L\'evy-type approach allows us to obtain good upper bounds for these coefficients.
\\
\noindent \textbf{Keywords} R\'enyi-type continued fractions $\cdot$ incomplete quotients $\cdot$ natural extension $\cdot$ $\psi$-mixing coefficients.
\\

\noindent \textbf{Mathematics Subject Classification} Primary 11J70 $\cdot$ 11K50; Secondary 60J10

\sloppy

\section{Introduction}
%
The regular continued fraction (RCF) expansions of real numbers have long been known as interesting and fruitful because their digits are closely connected to a dynamical system with nice mixing properties and an explicit invariant measure. 

For many dynamical systems, it is possible to prove the existence of an invariant measure, however, there are few systems for which this measure is explicitly known. 

We briefly recall some facts about RCFs. 
Every irrational number $x \in I:=[0,1]$ can be uniquely expressed as an infinite continued fraction of the form 
\begin{equation} \label{1.01}
x = \displaystyle \frac{1}{d_1(x) + \displaystyle \frac{1}{d_2(x) + \displaystyle \frac{1}{d_3(x) + \ddots}}} =:[d_1(x), d_2(x), d_3(x), \ldots],
\end{equation}
where the sequence $(d_n(x))_{n \geq 1}$ consists of natural integers which we refer to as RCF digits or incomplete quotients of $x$. 
Using the Gauss map $G(x):=1/x \, (\mathrm{mod} \, 1)$, $x \in I$, the digits are recursively given by 
$
d_1(x) = \left\lfloor \frac{1}{x} \right\rfloor$, where $\left\lfloor \cdot \right\rfloor$ denotes the floor function, 
and
$
d_{n+1}(x) = d_1\left( G^{n} (x) \right),  n \geq 1
$.
The Gauss map is invariant under the Gauss measure, $\gamma([0,x]) = \log(1+x)/\log2$.
Equipped with the Gauss measure, we may think of $(I, \mathcal{B}_I)$ as a probability space and the sequence of incomplete quotients $d_i : I \to \mathbb{N}$ as a sequence of random variables. 

In the case of the RCF, the random variables $(d_n)$ form a stationary sequence due to the invariance of the Gauss measure with respect to $G$. 
The $d_n$s are known to be $\psi$-mixing with respect to the Gauss measure $\gamma$. This follows from independent work of Kuzmin and L\'evy in the late 1920's, see for example \cite{I-1993} for the proof and a discussion of its history.   

The results of Kuzmin and L\'evy were then followed by various improvements by several authors (see \cite{IK-2002}).

In 2000, Iosifescu \cite{I-2000} gave a more general result, proving that the sequence of incomplete quotients of the RCF expansion is $\psi$-mixing under different probability measures. 

In general, just upper bounds for $\psi$-mixing or other dependence coefficients are derived. 

The problem of finding the exact values of $\psi$-mixing coefficients under Gauss' measure or upper bounds of them under a particular family of conditional probability measures of the sequence of RCF digits was solved in \cite{I-2000}.

This paper continues a series of papers \cite{LS-2020-1, LS-2020-2, SL-2020-3}, dedicated to R\'enyi-type continued fraction expansions.
These continued fractions are a particular case of $u$-backward continued fractions studied by Gr\"ochenig and Haas \cite{Grochenig&Haas-1996}.
Starting from the R\'enyi map $R$ \cite{Renyi-1957}, Gr\"ochenig and Haas define a one-parameter family of interval maps of the form
$T_u (x) :=  \frac{1}{u(1-x)} - \lfloor \frac{1}{u(1-x)} \rfloor$,
where $u>0$ and $x \in [0, 1)$.
As the parameter $u$ varies in $(0,4)$ there is a viable theory of $u$-backward continued fractions, which fails when $u \geq 4$.
The main purpose of Gr\"ochenig and Haas was to find an explicit form for the absolutely continuous invariant measure for $T_u$ similar to that of the Gauss measure ${\mathrm{d}x}/{(x+1)}$ for $G$ and R\'enyi's measure ${\mathrm{d}x}/{x}$ for $R:=T_1$.
They  identified in \cite{Grochenig&Haas-1996} the invariant probability measure for $T_u$ corresponding to the values $u = 1/N$, for positive integers $N \geq 2$, namely ${\mathrm{d}x}/{(x+N-1)}$. So the map $R_N:=T_{1/N}$, $N \geq 2$, will be called a \textit{R\'enyi-type transformation}.
In \cite{LS-2020-1} we started an approach to the metrical theory of R\'enyi-type continued fraction expansions via dependence with complete connections \cite{IG-2009}. Using the natural extensions for R\'enyi-type transformation, we obtained an infinite-order-chain representation $(\overline{a}_l)_{l \in \mathbb{Z}}$ of the sequence $({a}_n)_{n \in \mathbb{N}_+}$ of incomplete quotients of these expansions.

Our goal in this paper is to investigate the $\psi$-mixing coefficients $\left( \psi_{\mu}(n) \right)_{n \geq 1}$, where $\mu \in \mathrm{pr}(I)$, of the incomplete quotients $({a}_n)_{n \in \mathbb{N}_+}$ under the invariant probability measure $\rho_N$ of $R_N$ and under a one-parameter family
$\{ \rho^t_N, t \in I \}$ of conditional probability measures on $(I, \mathcal{B}_I)$ inspired by Doeblin \cite{Doeblin}. 
Here $\mathcal{B}_I$ denotes the $\sigma$-algebra of all Borel subsets of $I$.
Since the computation of these coefficients becomes forbidding for $n \geq 3$, we use a L\'evy-type approach developed in Section 3 (Theorem \ref{th.3.1}) to derive good upper bounds for them whatever $n \geq 3$.
In section 4 we prove that the sequence $({a}_n)_{n \in \mathbb{N}_+}$ is $\psi$-mixing under $\rho_N$ and $\rho^t_N$, $t \in I$.
Also the doubly infinite sequence $(\overline{a}_l)_{l \in \mathbb{Z}}$ of extended incomplete quotients is $\psi$-mixing under the extended invariant measure $\overline{\rho}_N$ and its $\psi$-mixing coefficients are equal to the corresponding $\psi$-mixing coefficients under $\rho_N$ of $({a}_n)_{n \in \mathbb{N}_+}$.

\section{Prerequisites}
In this section we briefly present known results about R\'enyi-type continued fractions (see, e.g., \cite{LS-2020-1}).

\subsection{R\'enyi-type continued fraction expansions}

Fix an integer $N \geq 2$.
Let the \textit{R\'enyi-type continued fraction transformation} $R_N : I \rightarrow I$ be given by
\begin{equation} \label{2.01}
R_{N}(x) := \frac{N}{1-x}- \left\lfloor\frac{N}{1-x}\right\rfloor, x \neq 1; \quad R_{N}(1) := 0.
\end{equation}
For any irrational $x \in I$, $R_N$ generates a new continued fraction expansion of $x$ of the form
\begin{equation} \label{2.02}
x = 1 - \displaystyle \frac{N}{1+a_1 - \displaystyle \frac{N}{1+a_2 - \displaystyle \frac{N}{1+a_3 - \ddots}}} =:[a_1, a_2, a_3, \ldots]_R.
\end{equation}
Here, $a_n$'s are non-negative integers greater than or equal to $N$ defined by
$
a_1:=a_1(x) = \left\lfloor \frac{N}{1-x} \right\rfloor, x \neq 1; a_1(1)=\infty
$
and
$
a_n := a_n(x) = a_1\left( R^{n-1}_N (x) \right),  n \geq 2,
$
with $R_{N}^0 (x) = x$.

The R\'enyi-type continued fraction in (\ref{2.02}) can be viewed as a measure preserving dynamical system $\left(I,{\mathcal B}_{I}, R_N, \rho_N \right)$, and for any integer $N \geq 2$
\begin{equation}
\rho_N (A) :=
\left( \log \left(\frac{N}{N-1}\right) \right)^{-1} \int_{A} \frac{\mathrm{d}x}{x+N-1}, \quad A \in {\mathcal{B}}_{I} \label{2.03}
\end{equation}
is the absolutely continuous invariant probability measure under $R_N$ \cite{Grochenig&Haas-1996}.

\subsection{Natural extension and extended random variables}

Now we recall the natural extension $\overline{R}_N$ of $R_N$ and its extended random variables introduced in (\cite{LS-2020-1}).
Let $\left(I,{\mathcal B}_{I}, R_N \right)$ be as in Section 2.1 and define $(u_{N,i})_{i \geq N}$ by
\begin{equation} \label{2.1}
u_{N,i}: I \rightarrow I; \quad
u_{N,i}(x) := 1 - \frac{N}{x+i}, \quad x \in I.
\end{equation}
For each $i \geq N$, $\left(R_N \circ u_{N,i}\right)(x) = x$ for any $x \in I$
and if $a_1(x)=i$, then $\left(u_{N,i} \circ R_N \right)(x)=x$, $x \in I$.
The natural extension $\left(I^2, {\mathcal B}^2_{I},\overline{R}_N \right)$ of $\left(I,{\mathcal B}_{I}, R_N \right)$ is the transformation $\overline{R}_N$ of the square space $\left(I^2,{\mathcal B}^2_{I} \right):=\left(I, {\mathcal B}_{I}\right) \times \left(I, {\mathcal B}_{I}\right)$
defined as follows:
\begin{equation} \label{2.2}
\overline{R}_N: I^2 \rightarrow I^2; \quad \overline{R}_N(x,y) := \left( R_N(x), \,u_{N,a_1(x)}(y) \right), \quad (x, y) \in I^2.
\end{equation}
Since $\overline{R}_N$ is bijective on $I^2$ with the inverse
\begin{equation} \label{2.3}
(\overline{R}_N)^{-1}(x, y)
= (u_{N, a_1(y)}(x), \,
R_N(y)), \quad (x, y) \in I^2.
\end{equation}
we get the following iterations for each $n \geq 2$:
\begin{eqnarray}
\left(\overline{R}_N\right)^n(x, y) =
\left(\,R^n_N(x), \,[a_n(x), a_{n-1}(x), \ldots, a_2(x),\, a_1(x)+ y - 1 ]_R \,\right),  \label{2.4} \\
\nonumber \\
\left(\overline{R}_N\right)^{-n}(x, y) =
\left(\,[a_n(y), a_{n-1}(y), \ldots, a_2(y), \,a_1(y)+x-1]_R,\, R^{n}_N(y) \,\right). \label{2.5}
\end{eqnarray}
For $\rho_N$ in (\ref{2.03}), we define its \textit{extended measure} $\overline{\rho}_N$ on $\left(I^2, {\mathcal{B}}^2_{I}\right)$ as
\begin{equation} \label{2.6}
\overline{\rho}_N(B) :=\left( \log \left(\frac{N}{N-1}\right) \right)^{-1} \int\!\!\!\int_{B}
\frac{N\mathrm{d}x\mathrm{d}y}{\left[ N-(1-x)(1-y) \right]^2}, \quad B \in {\mathcal{B}}^2_{I}.
\end{equation}
Then
$\overline{\rho}_N(A \times I) = \overline{\rho}_N(I \times A) = \rho_N(A)$ for any $A \in {\mathcal{B}}_{I}$.
The measure $\overline{\rho}_N$ is preserved by $\overline{R}_N$, i.e.,
$\overline{\rho}_N ((\overline{R}_N)^{-1}(B))
= \overline{\rho}_N (B)$ for any $B \in {\mathcal{B}}^2_{I}$.
With respect to $\overline{R}_N$, define \textit{extended incomplete quotients} $\overline{a}_l(x,y)$,
$l \in \mathbb{Z}:=\{\ldots, -2, -1, 0, 1, 2, \ldots\}$ at $(x, y) \in I^2$ by
\begin{equation*}
\overline{a}_{l}(x, y) := \overline{a}_1\left(\,(\overline{R}_N)^{l-1} (x, y) \,\right),
\quad l \in \mathbb{Z},
\end{equation*}
with $\overline{a}_1(x,y) = a_1(x)$, $(x,y) \in I^2$.
By (\ref{2.1}) and (\ref{2.3}) we have
\begin{equation*}
\overline{a}_n(x, y) = a_n(x), \quad
\overline{a}_0(x, y) = a_1(y), \quad
\overline{a}_{-n}(x, y) = a_{n+1}(y),
\end{equation*}
for any $n \in \mathbb{N}_+$ and $(x, y) \in I^2$.
Since the measure $\overline{\rho}_N$ is preserved by $\overline{R}_N$,
the doubly infinite sequence $(\overline{a}_l(x,y))_{l \in \mathbb{Z}}$
is strictly stationary under $\overline{\rho}_N$.

Now recall some results obtained in \cite{LS-2020-1}.
\begin{theorem} \label{th.2.1}
Fix $(x,y) \in I^2$ and let $\overline{a}_{l}:=\overline{a}_l(x,y)$ for $l \in {\mathbb Z}$.
Set $a:= [\overline{a}_0, \overline{a}_{-1}, \ldots]_R$. For any $x \in I$:
\begin{equation} \label{2.8}
\overline{\rho}_N \left( [0, x] \times I \,\vert
\,\overline{a}_0, \overline{a}_{-1}, \ldots \right)
= \frac{Nx}{N - (1-x)(1-a)} \quad \overline{\rho}_N \mbox{-}\mathrm{a.s.}
\end{equation}
\end{theorem}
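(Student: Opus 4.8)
The plan is to verify the formula directly from the characterizing property of conditional expectation, without constructing an explicit disintegration. Write $(\xi,\eta)$ for the running coordinates of $I^2$, so that $\overline{\rho}_N$ is the measure with the density in (\ref{2.6}). The first step is to identify the conditioning $\sigma$-algebra: using the relations $\overline{a}_0(\xi,\eta)=a_1(\eta)$ and $\overline{a}_{-n}(\xi,\eta)=a_{n+1}(\eta)$ recorded above, $\mathcal{G}:=\sigma(\overline{a}_0,\overline{a}_{-1},\ldots)$ is clearly contained in $\sigma(\eta)=\{\,I\times B:B\in\mathcal{B}_I\,\}$; conversely, since for every irrational $\eta$ the digit string $(a_n(\eta))_{n\ge 1}$ reconstructs $\eta$ through the convergent expansion (\ref{2.02}) (established in \cite{LS-2020-1}), the coordinate $\eta$ agrees $\overline{\rho}_N$-a.s. with a $\mathcal{G}$-measurable function, whence $\mathcal{G}=\sigma(\eta)$ modulo $\overline{\rho}_N$-null sets. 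The very same reconstruction shows $a=[\overline{a}_0,\overline{a}_{-1},\ldots]_R=[a_1(\eta),a_2(\eta),\ldots]_R=\eta$, so the right-hand side of (\ref{2.8}) is the $\mathcal{G}$-measurable random variable $(\xi,\eta)\mapsto Nx\,(N-(1-x)(1-\eta))^{-1}$, which is well defined since $N-(1-x)(1-\eta)\ge N-1\ge 1$.

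It then remains to check the single identity
\begin{equation*}
\overline{\rho}_N\big([0,x]\times B\big)=\int_{I\times B}\frac{Nx}{\,N-(1-x)(1-\eta)\,}\;\mathrm{d}\overline{\rho}_N(\xi,\eta),\qquad B\in\mathcal{B}_I,
\end{equation*}
since the sets $I\times B$ form a $\pi$-system generating $\mathcal{G}$, and this identity together with the $\mathcal{G}$-measurability of the candidate is precisely the definition of $\overline{\rho}_N([0,x]\times I\mid\mathcal{G})$. Both sides are computed from (\ref{2.6}) by Fubini after integrating out $\xi$. With the substitution $u=N-(1-\xi)(1-\eta)$ one obtains $\int_0^x \frac{N\,\mathrm{d}\xi}{[N-(1-\xi)(1-\eta)]^2}=\frac{Nx}{(N-1+\eta)(N-(1-x)(1-\eta))}$ for the left-hand side, and $\int_0^1 \frac{N\,\mathrm{d}\xi}{[N-(1-\xi)(1-\eta)]^2}=\frac{1}{N-1+\eta}$ for the $\xi$-integral appearing on the right — the latter reproving $\overline{\rho}_N(I\times A)=\rho_N(A)$. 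Multiplying the second computation by $Nx\,(N-(1-x)(1-\eta))^{-1}$ produces exactly the same $\eta$-integrand as the first, so the two sides agree for every $B$, and (\ref{2.8}) follows $\overline{\rho}_N$-almost surely.

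The only genuinely delicate point is the identification $\mathcal{G}=\sigma(\eta)$ up to null sets, i.e.\ that conditioning on the whole one-sided string $\overline{a}_0,\overline{a}_{-1},\ldots$ is the same as conditioning on the second coordinate $\eta$; once that is in hand, the rest is a single one-variable change of variables. I would make it precise by invoking from \cite{LS-2020-1} the convergence and uniqueness of the expansion (\ref{2.02}) for irrational arguments, which make $\eta\mapsto(a_1(\eta),a_2(\eta),\ldots)$ essentially injective with Borel inverse, together with the fact that the set $\{(\xi,\eta):\eta\ \text{irrational}\}$ carries full $\overline{\rho}_N$-measure because $\overline{\rho}_N$ is absolutely continuous.
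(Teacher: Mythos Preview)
Your argument is correct. The identification $\mathcal{G}=\sigma(\eta)$ modulo $\overline{\rho}_N$-null sets is exactly the right observation, and once it is in place the verification of the defining property of conditional expectation on the $\pi$-system $\{I\times B:B\in\mathcal{B}_I\}$ reduces, as you show, to the elementary identity
\[
\int_0^x \frac{N\,\mathrm{d}\xi}{[N-(1-\xi)(1-\eta)]^2}
=\frac{Nx}{(N-1+\eta)\big(N-(1-x)(1-\eta)\big)},
\]
which you compute correctly (and whose $x=1$ case indeed recovers the marginal $\overline{\rho}_N(I\times A)=\rho_N(A)$).

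As for comparison with the paper: note that the present paper does not give a proof of Theorem~\ref{th.2.1} at all. The result is listed among the facts ``recalled'' from \cite{LS-2020-1}, so there is no in-text argument here to set yours against. Your proof is a clean, self-contained direct verification via the explicit joint density~(\ref{2.6}); it avoids any disintegration machinery and makes transparent why the formula holds, at the modest cost of having to justify the $\sigma$-algebra identification carefully. The one place I would tighten the write-up is the claim that $\eta\mapsto(a_n(\eta))_{n\ge1}$ has a Borel inverse: it is enough (and simpler) to say that $\eta=[a_1(\eta),a_2(\eta),\ldots]_R$ for irrational $\eta$ exhibits $\eta$ as an a.s.\ limit of $\mathcal{G}$-measurable functions (the finite truncations), hence $\eta$ is $\mathcal{G}$-measurable modulo null sets; no injectivity or Borel-inverse argument is required.
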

The stochastic property of $(\overline{a}_l)_{l \in \mathbb{Z}}$ under $\overline{\rho}_N$ is as follows.
\begin{corollary} \label{cor.2.2}
For any $i \geq N$, we have
\begin{equation} \label{2.9}
\overline{\rho}_N (\overline{a}_1 = i \vert \overline{a}_0, \overline{a}_{-1}, \ldots) = P_{N,i}(a) \quad \overline{\rho}_N \mbox{-}\mathrm{a.s.}
\end{equation}
where $a = [\overline{a}_0, \overline{a}_{-1}, \ldots]_R$ and
\begin{equation}\label{2.0000}
  P_{N,i}(x) := \frac{x+N-1}{(x+i)(x+i-1)}.
\end{equation}
\end{corollary}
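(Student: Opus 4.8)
The plan is to deduce Corollary \ref{cor.2.2} directly from Theorem \ref{th.2.1} by translating the statement about the conditional distribution function of the first coordinate into a statement about the conditional probability of the event $\{\overline{a}_1 = i\}$. The key observation is that $\overline{a}_1(x,y) = a_1(x) = i$ precisely when $x$ lies in the $i$-th fundamental interval of $R_N$, i.e. when $\lfloor N/(1-x)\rfloor = i$. From \eqref{2.01} this is the condition $i \le N/(1-x) < i+1$, which rearranges to $1 - N/i \le x < 1 - N/(i+1)$; equivalently, using the branch inverses $u_{N,i}$ from \eqref{2.1}, the set $\{x \in I : a_1(x) = i\}$ is the interval with endpoints $u_{N,i}(0) = 1 - N/i$ and $u_{N,i+1}(0) = 1 - N/(i+1)$ (endpoints inside $I$ since $i \ge N$). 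Hence $\{\overline{a}_1 = i\} \times I = \big([1-N/i,\ 1-N/(i+1)]\setminus\{\text{endpoint}\}\big) \times I$ up to $\overline{\rho}_N$-null sets.

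First I would fix a conditioning realization $\overline{a}_0, \overline{a}_{-1},\ldots$ and write $a := [\overline{a}_0,\overline{a}_{-1},\ldots]_R \in I$. Define
\begin{equation*}
F(x) := \overline{\rho}_N\!\left([0,x]\times I \,\middle|\, \overline{a}_0,\overline{a}_{-1},\ldots\right) = \frac{Nx}{N-(1-x)(1-a)},
\end{equation*}
the last equality being exactly \eqref{2.8}. Since the sigma-field generated by $\overline{a}_0,\overline{a}_{-1},\ldots$ is a sub-sigma-field and $\{\overline{a}_1 = i\}\times I$ is (a.s.) the product of the interval $J_i := [\,u_{N,i+1}(0),\ u_{N,i}(0)\,] = [\,1-N/(i+1),\ 1-N/i\,]$ with $I$, additivity of conditional probability gives
\begin{equation*}
\overline{\rho}_N(\overline{a}_1 = i \mid \overline{a}_0,\overline{a}_{-1},\ldots) = F\!\left(1-\tfrac{N}{i}\right) - F\!\left(1-\tfrac{N}{i+1}\right) \quad \overline{\rho}_N\text{-a.s.}
\end{equation*}
Then I would substitute into the closed form: with $1-x = N/i$ one gets $F(1-N/i) = \dfrac{N(1-N/i)}{N - (N/i)(1-a)} = \dfrac{i - N}{i - (1-a)} = \dfrac{i+N-1 - (2N-1)}{i - 1 + a}$... more cleanly, $F(1-N/i) = \dfrac{N(i-N)/i}{N(i - 1 + a)/i} = \dfrac{i-N}{i-1+a}$. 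Similarly $F(1-N/(i+1)) = \dfrac{i+1-N}{i+a}$. Subtracting and putting over the common denominator $(i-1+a)(i+a)$ yields, after the routine algebra, $\dfrac{(i-N)(i+a) - (i+1-N)(i-1+a)}{(i-1+a)(i+a)} = \dfrac{a+N-1}{(a+i-1)(a+i)} = P_{N,i}(a)$, which is \eqref{2.0000} evaluated at $a$. That completes the identification.

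The only genuinely delicate point is the measure-theoretic bookkeeping: one must check that conditioning is being done with respect to the same sigma-field in Theorem \ref{th.2.1} and in the Corollary, that the ``$\overline{\rho}_N$-a.s.'' exceptional sets can be absorbed, and that the event $\{\overline{a}_1 = i\}$ really corresponds to the half-open interval $J_i$ up to a null set (the single endpoint $x = 1-N/i$, where $a_1$ jumps, carries zero mass). None of this is hard — it is the standard passage from a conditional distribution function to the conditional mass of an interval — but it is where all the care is needed; the remaining substitution and simplification are purely computational and monotone in $i$. I expect no obstacle beyond this, since $F$ is manifestly continuous and increasing on $I$ for each fixed $a \in I$, so the increments $F(1-N/i) - F(1-N/(i+1))$ are nonnegative and sum to $F(1) - F(0) = 1$ over $i \ge N$, consistent with \eqref{2.9} being a genuine probability distribution on $\{N, N+1, \ldots\}$.
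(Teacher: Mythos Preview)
The paper does not actually prove Corollary~\ref{cor.2.2}: both Theorem~\ref{th.2.1} and its corollary are stated without proof under ``recall some results obtained in \cite{LS-2020-1}''. Your derivation from Theorem~\ref{th.2.1} via the increments of the conditional distribution function is exactly the natural route and is correct in substance.

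There is, however, a consistent orientation slip that you should fix. Since $N/i > N/(i+1)$, one has $1-N/i < 1-N/(i+1)$, so the fundamental interval $\{a_1=i\}$ is (up to a null endpoint) $[\,1-N/i,\ 1-N/(i+1)\,)$, with $1-N/i$ the \emph{left} endpoint. Your $J_i$ is written with the endpoints reversed, and correspondingly the displayed difference should be
\[
\overline{\rho}_N(\overline{a}_1=i\mid \overline{a}_0,\overline{a}_{-1},\ldots)=F\!\left(1-\tfrac{N}{i+1}\right)-F\!\left(1-\tfrac{N}{i}\right),
\]
not the other way round. As written, your numerator $(i-N)(i+a)-(i+1-N)(i-1+a)$ equals $-(a+N-1)$, giving $-P_{N,i}(a)$; swapping the order of subtraction yields $+P_{N,i}(a)$ and the proof is complete. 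The same sign issue recurs in your final paragraph (the increments $F(1-N/i)-F(1-N/(i+1))$ are nonpositive, not nonnegative). With this correction, everything goes through exactly as you describe.
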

\begin{remark} \label{rem.2.3}
The strict stationarity of $\left(\overline{a}_l\right)_{l \in \mathbb{Z}}$, under $\overline{\rho}_N$ implies that
\begin{equation} \label{2.10}
\overline{\rho}_N(\overline{a}_{l+1} = i\, \vert \, \overline{a}_l, \overline{a}_{l-1}, \ldots)
= P_{N,i}(a) \quad \overline{\rho}_N \mbox{-}\mathrm{a.s.}
\end{equation}
for any $i \geq N$ and $l \in \mathbb{Z}$, where $a = [\overline{a}_l, \overline{a}_{l-1}, \ldots]_R$.
\end{remark}
Define extended random variables $\left(\overline{s}_l\right)_{l \in \mathbb{Z}}$ by
$\overline{s}_l := [\overline{a}_l, \overline{a}_{l-1}, \ldots]_R$, $l \in \mathbb{Z}$.
Clearly, $\overline{s}_l = \overline{s}_{0} \circ (\overline{R}_N)^l$, $l \in \mathbb{Z}$.
It follows from Corollary \ref{cor.2.2} that $\left(\overline{s}_l\right)_{l \in \mathbb{Z}}$ is a strictly stationary $[0, 1)$-valued Markov process on $\left(I^2,{\mathcal{B}}^2_{I}, \overline{\rho}_{N} \right)$
with the following transition mechanism. From state $\overline{s} \in I$ the possible transitions are to any state $1 - N/(\overline{s} + i)$ with corresponding transition probability $P_{N,i}(\overline{s})$, $i \geq N$.
Clearly, for any $l \in \mathbb{Z}$ we have
\begin{equation} \label{2.11}
\overline{\rho}_{N}(\overline{s}_l < x ) = \overline{\rho}_{N}(I \times [0,x)) = \rho_{N}([0,x)), \quad x \in I.
\end{equation}
Motivated by Theorem \ref{th.2.1}, we shall consider the one-parameter family $\{\rho^t_{N}: t \in I\}$
of (conditional) probability measures on $\left(I, {\mathcal{B}}_{I} \right)$
defined by their distribution functions
\begin{equation} \label{2.12}
\rho^t_{N} ([0, x]) := \frac{Nx}{N - (1-x)(1-t)}, \quad x, t \in I.
\end{equation}
Note that $\rho^1_{N} = \lambda$.
The density of $\rho^t_N$ is
\begin{equation}\label{2.13}
  h^t_N (x) = \frac{N(N-1+t)}{[N-(1-x)(1-t)]^2}, \quad x,t \in I.
\end{equation}

For any $t \in I$ put
\begin{equation}\label{2.14}
s^t_{N,0} := t,\quad
s^t_{N,n} := 1 - \frac{N}{a_n + s^t_{N,n-1}}, \quad n \in \mathbb{N}_+.
\end{equation}
\begin{remark} \label{rem.2.4}
It follows from the properties just described for the process $(\overline{s}_l)_{l \in \mathbb{Z}}$
that the sequence $(s^t_{N,n})_{n \in {\mathbb{N}}_+}$ is an $I$-valued Markov chain on
$\left(I,{\mathcal B}_{I}, \rho^t_{N} \right)$
which starts at $s^t_{N,0} := t$ and has the following transition mechanism:
from state $s \in I$ the possible transitions are to any state $1 - N/(s+i)$ with corresponding transition probability $P_{N,i}(s)$, $i \geq N$.
\end{remark}
Now, we recall that an $n$-block $(a_1, a_2, \ldots, a_n)$ is said to be \textit{admissible} for the expansion in (\ref{2.02}) if there exists $x \in [0, 1)$ such that $a_i(x)=a_i$ for all $1 \leq i \leq n$.
If $(a_1, a_2, \ldots, a_n)$ is an admissible sequence, we call the set
\begin{equation}
I(a_1, a_2, \ldots, a_n) := \{x \in I:  a_1(x) = a_1, a_2(x) = a_2, \ldots, a_n(x) = a_n \}, \label{2.014}
\end{equation}
\textit{the $n$-th order cylinder}. As we mentioned above, $(a_1, a_2, \ldots, a_n) \in \Lambda^n$, with $\Lambda:=\{N, N+1, \ldots\}$.

\begin{theorem}[Generalized Brod\'en-Borel-L\'evy-type formula] \label{BBLgen}
For any $t\in I$ and $n \in \mathbb{N}_+$, we have
\begin{equation}\label{2.15}
\rho^t_N (R^n_N < x \vert a_1,\ldots, a_n ) = \frac{Nx}{N-(1-x)(1-s^t_{N,n})}, \quad x \in I.
\end{equation}
\end{theorem}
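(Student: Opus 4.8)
The plan is to prove the Generalized Brod\'en-Borel-L\'evy-type formula \eqref{2.15} by induction on $n$, using the explicit branch structure of $R_N$ together with the recursion \eqref{2.14} for $s^t_{N,n}$. First I would treat the base case $n=1$: on the cylinder $I(i) = \{a_1 = i\}$ the map $R_N$ acts as $R_N(x) = N/(1-x) - i$, and its inverse branch is precisely $u_{N,i}(y) = 1 - N/(y+i)$ from \eqref{2.1}. So for $x\in I$ one has $\{R_N < x\}\cap I(i) = \{x' \in I(i) : x' < u_{N,i}(x)\}$, and I would compute $\rho^t_N(R_N < x \mid a_1 = i)$ as the ratio $\rho^t_N([l_i, u_{N,i}(x)]) / \rho^t_N(I(i))$, where $l_i$ is the left endpoint of $I(i)$. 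Plugging in the distribution function \eqref{2.12} and simplifying (the nonlinear denominators telescope because $u_{N,i}$ is a M\"obius map), the answer should collapse to $Nx / (N - (1-x)(1-s^t_{N,1}))$ with $s^t_{N,1} = 1 - N/(a_1 + t)$, matching \eqref{2.14}.

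For the inductive step, assume \eqref{2.15} holds for $n$; I want it for $n+1$. The key observation is that conditioning on $(a_1,\ldots,a_{n+1})$ can be done in two stages: condition first on $(a_1,\ldots,a_n)$, under which (by the inductive hypothesis) $R^n_N$ has, on the cylinder $I(a_1,\ldots,a_n)$, the distribution function $x \mapsto Nx/(N-(1-x)(1-s^t_{N,n}))$ — that is, $R^n_N$ is distributed as $\rho^{s^t_{N,n}}_N$. Then conditioning further on $a_{n+1} = a_{n+1}(R^n_N(x))$ is exactly the $n=1$ case applied with the parameter $t$ replaced by $s^t_{N,n}$, which by the base-case computation gives $R^{n+1}_N$ the distribution function $Nx/(N - (1-x)(1-s))$ with $s = 1 - N/(a_{n+1} + s^t_{N,n}) = s^t_{N,n+1}$, again matching \eqref{2.14}. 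Making the two-stage conditioning rigorous is really a statement that the family $\{\rho^t_N\}$ is closed under the operation ``condition on the first digit and push forward by $R_N$,'' with the parameter updated by the recursion \eqref{2.14}; I would phrase this as a short lemma (or fold it into the $n=1$ computation) so the induction is clean.

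The main obstacle I anticipate is bookkeeping with the nested M\"obius maps: the composition $u_{N,a_1}\circ u_{N,a_2}\circ\cdots\circ u_{N,a_n}$ that describes the relevant inverse branch of $R^n_N$ must be shown to carry the parameter $t$ to $s^t_{N,n}$ in exactly the form \eqref{2.14}, and one must verify the endpoints of the cylinders cancel correctly in the ratio so that no spurious constants survive. This is purely a computation with linear-fractional transformations — each $u_{N,i}$ has derivative $N/(x+i)^2 > 0$ and is increasing, so orientation is preserved and the ratio of $\rho^t_N$-measures is genuinely a distribution function — but it is the step where an error would most easily creep in. A convenient way to organize it is to note that $\rho^t_N([0,x]) = Nx/(N-(1-x)(1-t))$ is itself a M\"obius function of $x$ whose coefficients depend on $t$ in a bilinear way, so that pulling it back through $u_{N,i}$ and renormalizing to $[0,1]$ again produces a function of the same shape with $t$ replaced by $1 - N/(i+t)$; iterating this identity $n$ times is precisely the content of \eqref{2.15}, and it reduces the whole proof to checking one algebraic identity and then invoking it inductively.
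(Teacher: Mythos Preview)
Your inductive argument is correct and carries through exactly as you outline: the base case is a direct M\"obius computation (on $I(i)$ one finds $\rho^t_N(R_N<x\mid a_1=i)=x(i+t)/(x+i-1+t)$, which is $Nx/(N-(1-x)(1-s^t_{N,1}))$), and the inductive step is just the tower property together with the observation that the family $\{\rho^t_N\}$ is closed under ``condition on the first digit and push forward by $R_N$'' with $t\mapsto 1-N/(a_1+t)$.

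This is, however, a genuinely different route from the paper's. The paper does not induct and does not compute with inverse branches at all; instead it passes to the natural extension $(\overline{R}_N,\overline{\rho}_N)$ on $I^2$ and reads off \eqref{2.15} in one stroke. Concretely, it considers the conditional probability $\overline{\rho}_N(\overline{R}_N^{-n}([0,x]\times I)\mid \overline{a}_n,\ldots,\overline{a}_1,\overline{a}_0,\overline{a}_{-1},\ldots)$ and evaluates it two ways: by $\overline{R}_N$-invariance together with Theorem~\ref{th.2.1} it equals $Nx/(N-(1-x)(1-s^t_{N,n}))$ (since $[\overline{a}_n,\ldots,\overline{a}_1,\overline{a}_0,\overline{a}_{-1},\ldots]_R=s^t_{N,n}$ when $t=[\overline{a}_0,\overline{a}_{-1},\ldots]_R$), while unpacking the conditioning via $\overline{\rho}^t_N(\cdot)=\overline{\rho}_N(\cdot\mid\overline{a}_0,\overline{a}_{-1},\ldots)$ and $\overline{\rho}^t_N(A\times I)=\rho^t_N(A)$ shows it also equals $\rho^t_N(R^n_N<x\mid a_1,\ldots,a_n)$. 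Your approach is more elementary and self-contained---it requires nothing beyond \eqref{2.12}, \eqref{2.1}, and \eqref{2.14}---whereas the paper's approach is more conceptual: it explains \emph{why} the formula holds (the past in the natural extension is literally the parameter $t$) and avoids any iterated M\"obius bookkeeping, at the cost of relying on the machinery of Section~2.2 and Theorem~\ref{th.2.1}.
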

\begin{proof}
For any $n \in \mathbb{N}_+$ and $x \in I$ consider the conditional probability
\begin{equation} \label{2.16}
\overline{\rho}_N(\left.\overline{R}^{-n}_{N} ([0,x] \times I)\, \right \vert \, \overline{a}_n, \ldots, \overline{a}_{1}, \overline{a}_{0}, \overline{a}_{-1}, \ldots).
\end{equation}
Put $t=[\overline{a}_{0}, \overline{a}_{-1}, \ldots]_R$ and note that
$[\overline{a}_n, \ldots, \overline{a}_{1}, \overline{a}_{0}, \overline{a}_{-1}, \ldots]_R = s^t_{N,n}$.
On the one hand, it follows from the fact that the measure $\overline{\rho}_N$ is preserved by $\overline{R}_N$ and from Theorem \ref{th.2.1} and Remark \ref{rem.2.3} that the conditional probability (\ref{2.16}) $\overline{\rho}_N$ - a.s. equals
\[
\frac{Nx}{N-(1-x)(1-s^t_{N,n})}.
\]
On the other hand, putting
\[
\overline{\rho}^t_N(\cdot)=\overline{\rho}_N(\left.\cdot \, \right \vert \, \overline{a}_{0}, \overline{a}_{-1}, \ldots)
\]
it is clear that (\ref{2.16}) $\overline{\rho}_N$ - a.s. equals
\begin{equation} \label{2.17}
\frac{\overline{\rho}^t_N\left(\overline{R}^{-n}_{N} ([0,x] \times I)  \cap \left( I(a_1,\ldots, a_n) \times I\right)\right)} {\overline{\rho}^t_N\left(I(a_1,\ldots, a_n) \times I \right)}.
\end{equation}
Since $\overline{R}^{-n}_{N} ([0,x] \times I) = \overline{R}^{-n}_{N} ([0,x]) \times I$
and $\overline{\rho}^t_N (A \times I ) = {\rho}^t_N (A)$, $A \in \mathcal{B}_I$, the fraction in (\ref{2.17}) equals
\[
{\rho}^t_N
\left( \left.{R}^{-n}_{N} ([0,x])\, \right \vert \, I(a_1, \ldots, a_n)  \right) =
\rho^t_N (R^n_N < x \vert a_1,\ldots, a_n ).
\]
\end{proof}
\begin{corollary} \label{cor.2.6}
For any $t \in I$ and $n \in \mathbb{N}_+$ we have
\begin{equation}\label{2.18}
  \rho^t_N (A \vert a_1,\ldots, a_n ) = \rho^{s^t_{N,n}}_N \left(R^n_N(A) \right)
\end{equation}
whatever the set $A$ belonging to the $\sigma$-algebra generated by the random variables
$a_{n+1}, a_{n+2},\ldots$, that is, $R^{-n}_N \left( \mathcal{B}_I \right)$.
\end{corollary}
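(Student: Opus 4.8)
The plan is to deduce Corollary \ref{cor.2.6} from the Generalized Brod\'en-Borel-L\'evy-type formula (Theorem \ref{BBLgen}) by a standard monotone-class argument. First I would observe that the right-hand side of \eqref{2.15} is exactly the distribution function $\rho^{s^t_{N,n}}_N([0,x])$ by definition \eqref{2.12}, so Theorem \ref{BBLgen} can be rewritten as
\begin{equation*}
\rho^t_N\!\left(R^{-n}_N([0,x])\,\middle\vert\, a_1,\ldots,a_n\right) = \rho^{s^t_{N,n}}_N\!\left([0,x]\right), \qquad x \in I.
\end{equation*}
Equivalently, for every first-order cylinder-type set $A = R^{-n}_N([0,x]) \in R^{-n}_N(\mathcal B_I)$ we have $\rho^t_N(A \mid a_1,\ldots,a_n) = \rho^{s^t_{N,n}}_N(R^n_N(A))$, since $R^n_N\!\left(R^{-n}_N([0,x])\right) = [0,x]$ (using that $R_N$ restricted to each branch is a bijection onto $I$, so composing the branch inverse with $R_N$ recovers the set).

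Next I would upgrade from intervals $[0,x]$ to the whole $\sigma$-algebra $R^{-n}_N(\mathcal B_I)$. The sets $\{[0,x] : x \in I\}$ form a $\pi$-system generating $\mathcal B_I$; pulling back by $R^{-n}_N$, the sets $\{R^{-n}_N([0,x]) : x \in I\}$ form a $\pi$-system generating $R^{-n}_N(\mathcal B_I)$. Both $A \mapsto \rho^t_N(A \mid a_1,\ldots,a_n)$ and $A \mapsto \rho^{s^t_{N,n}}_N(R^n_N(A))$ are, for fixed values of $a_1,\ldots,a_n$ (hence fixed $s^t_{N,n}$), finite measures on $R^{-n}_N(\mathcal B_I)$ that agree on this generating $\pi$-system and assign the same total mass $1$ to $I$; by Dynkin's $\pi$-$\lambda$ theorem they coincide on all of $R^{-n}_N(\mathcal B_I)$. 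One should note here that $s^t_{N,n}$ is a function of $a_1,\ldots,a_n$ and $t$ only (this is immediate from the recursion \eqref{2.14}), so conditioning on $a_1,\ldots,a_n$ legitimately fixes its value and the measure $\rho^{s^t_{N,n}}_N(R^n_N(\cdot))$ is well-defined $\rho^t_N$-a.s.

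The main obstacle, such as it is, is purely bookkeeping: one must be careful that $R^n_N$ maps the relevant sets correctly, i.e. that on the event $\{a_1 = \alpha_1,\ldots,a_n = \alpha_n\}$ the map $R^n_N$ is the bijection $I(\alpha_1,\ldots,\alpha_n) \to I$ whose inverse is the composition $u_{N,\alpha_1}\circ\cdots\circ u_{N,\alpha_n}$ from \eqref{2.1}, so that $R^n_N(R^{-n}_N(B) \cap I(\alpha_1,\ldots,\alpha_n)) = B$ for every $B \in \mathcal B_I$; this is what makes the identity $R^n_N(A) \in \mathcal B_I$ meaningful and the pushforward structure work. Once that is in hand, the $\pi$-$\lambda$ step is routine, and the corollary follows. (Alternatively, one can avoid $\pi$-$\lambda$ entirely and argue that two probability measures on $I$ with the same distribution function are equal, applied to the image measures under $R^n_N$; this is the shortest route and is likely what the authors intend.)
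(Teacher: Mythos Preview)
Your proposal is correct and is precisely the argument the paper has in mind: the corollary is stated immediately after Theorem~\ref{BBLgen} without proof, and the intended derivation is just to recognize that the right-hand side of \eqref{2.15} is $\rho^{s^t_{N,n}}_N([0,x])$ by \eqref{2.12} and then extend from intervals to all of $\mathcal B_I$ by the standard uniqueness of measures (your $\pi$--$\lambda$ step, or equivalently the distribution-function argument you mention at the end). Your bookkeeping remark about $R^n_N$ being a bijection on each rank-$n$ cylinder is exactly what makes $R^n_N(A)$ well defined as a Borel set for $A\in R^{-n}_N(\mathcal B_I)$, so nothing is missing.
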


\section{A L\'evy-type approach}

In the sequel we shall obtain estimates for both errors $F^{t}_{N,n} - G_N$ and $G^{t}_{N,n} - G_N$, $t \in I$, $n \in \mathbb{N}$ where
\begin{eqnarray}
  &&F^{t}_{N,n}(x) := \rho^{t}_{N} (R^{n}_{N} < x), x \in I, n \in \mathbb{N} \label{3.1} \\
  &&G^{t}_{N,n}(s) := \rho^{t}_{N} (s^{t}_{N,n} < s), s \in I, n \in \mathbb{N}_+, \label{3.2}
\end{eqnarray}
and

\begin{equation} \label{3.3}
G^{t}_{N,0}(s) :=
\left\{
\begin{array}{ll}
               0, & { s \leq t }\\
               1, & {s > t}
\end{array}
\right.
, \quad
G_{N}(s) := \rho_N([0,s]), \quad s \in I.
\end{equation}
It follows from (\ref{2.15}) that
\begin{equation}\label{3.4}
  F^{t}_{N,n}(x) = \int_{0}^{1} \frac{Nx}{N-(1-x)(1-s)}\mathrm{d}G^{t}_{N,n}(s)
\end{equation}
for any $x, t \in I$ and $n \in \mathbb{N}$. It is easy to check that
\begin{equation}\label{3.5}
  G_{N}(x) = \int_{0}^{1} \frac{Nx}{N-(1-x)(1-s)}\mathrm{d}G_{N}(s), \quad x \in I
\end{equation}
and
\begin{equation}\label{3.6}
F^t_{N,n} \left(1-\frac{N}{i+1}\right) = G^t_{N,n+1} \left(1-\frac{N}{i+1}\right), \quad n\in \mathbb{N}_+, t \in I, i \geq N.
\end{equation}
The last equation is still valid for $n=0$ and $t \neq 1$, while
\begin{equation}\label{3.7}
F^1_{N,0} \left(1-\frac{N}{i}\right) = G^1_{N,1} \left(1-\frac{N}{i+1}\right), i \geq N.
\end{equation}

Since $\left( s^{t}_{N,n} \right)_{n \in \mathbb{N}}$ is a Markov chain on $\left( I, \mathcal{B}_I, \rho^t_N \right)$ for any $i \geq N$,
$n \in \mathbb{N}_+$, $t \in I$ and $\theta \in [0, 1)$ we have
\begin{eqnarray}
&&G^t_{N,n+1} \left(1-\frac{N}{i+\theta}\right) - G^t_{N,n+1} \left(1-\frac{N}{i}\right) = \rho^t_N \left( 1-\frac{N}{i} \leq s^{t}_{N,n+1}
< 1-\frac{N}{i+\theta} \right) \nonumber \\
 &&= E\left( \left. \rho^t_N \left( 1-\frac{N}{i} \leq s^{t}_{N,n+1} < 1-\frac{N}{i+\theta} \right) \right \vert s^{t}_{N,n}  \right) = \label{3.8}\\
 &&= \int_{0}^{\theta} P_{N,i}(s) \mathrm{d}G^t_{N,n}(s), \quad i \geq N \nonumber
\end{eqnarray}
while
\begin{eqnarray}
G^t_{N,1} \left(1-\frac{N}{i+\theta}\right) - G^t_{N,1} \left(1-\frac{N}{i}\right)
 &=&
 \rho^t_N \left( 1-\frac{N}{i} \leq s^{t}_{N,1} < 1-\frac{N}{i+\theta} \right) \nonumber \\
 &=& \int_{0}^{\theta} P_{N,i}(s) \mathrm{d}G^t_{N,0}(s),
 \label{3.9}
\end{eqnarray}
that is, (\ref{3.8}) also holds for $n=0$ if $t \in [0, 1)$.

It is easy to check that
\begin{equation}\label{3.10}
\int_{0}^{\theta} P_{N,i}(s) \mathrm{d}G_{N}(s) = G_{N} \left(1-\frac{N}{i+\theta}\right) - G_{N} \left(1-\frac{N}{i}\right)
\end{equation}
for any $i \geq N$ and $\theta \in [0,1)$.

Now, by (\ref{3.4}) and (\ref{3.5}) we have
\begin{eqnarray*}
  F^t_{N,n}(x) - G_N(x) &=& \int_{0}^{1} \frac{Nx}{N-(1-x)(1-s)} \mathrm{d}\left( G^t_{N,n}(s) - G_N(s) \right) \\
                        &=& -\int_{0}^{1} \left( G^t_{N,n}(s) - G_N(s) \right) \frac{\partial}{\partial s} \left( \frac{Nx}{N-(1-x)(1-s)} \right) \mathrm{d}s
\end{eqnarray*}
for any $x, t \in I$ and $n \in \mathbb{N}$.
Setting
\begin{equation}\label{3.11}
\alpha^t_{N,n} := \sup_{s \in I} \left \lvert G^t_{N,n}(s) - G_{N}(s) \right \rvert, \quad t \in I, n \in \mathbb{N},
\end{equation}
we obtain
\begin{equation*}
\left \lvert F^t_{N,n}(x) - G_{N}(x) \right \rvert \leq \alpha^t_{N,n} \int_{0}^{1} \frac{Nx(1-x)}{[N-(1-x)(1-s)]^2} \mathrm{d}s
= \alpha^t_{N,n} \frac{x(1-x)}{N-1+x},
\end{equation*}
hence
\begin{equation} \label{3.12}
\left \lvert F^t_{N,n}(x) - G_{N}(x) \right \rvert \leq \left( \sup_{x \in I} \frac{x(1-x)}{N-1+x} \right) \alpha^t_{N,n} = \beta_N\cdot \alpha^t_{N,n},
\end{equation}
$t, x \in I$ and $n \in \mathbb{N}$, where
\begin{equation} \label{3.13}
\beta_N := \frac{(1-N+\sqrt{N^2-N})(N-\sqrt{N^2-N})}{\sqrt{N^2-N}}=2N-1-2\sqrt{N(N-1)}.
\end{equation}
Let us note that
\begin{equation*}
\alpha^t_{N,0} = \max\left( G_N(t), 1-G_N(t) \right), \quad t \in I.
\end{equation*}

\begin{theorem} \label{th.3.1}
  For any $n \in \mathbb{N}_+$ and $t \in I$ we have
  \begin{equation*}
\sup_{x \in I} \left \lvert F^t_{N,n}(x) - G_N(x) \right \rvert \leq
\left\{
\begin{array}{ll}
               (0.171)(0.251)(0.348)^{n-1},  & { N=2 }\\
               \beta_N \cdot \delta_N \cdot c^{n-1}_N, & {N \geq 3},
\end{array}
\right.
  \end{equation*}
  \begin{equation*}
\sup_{x \in I} \left \lvert G^t_{N,n}(x) - G_N(x) \right \rvert \leq
\left\{
\begin{array}{ll}
               (0.251)(0.348)^{n-1},  & { N=2 }\\
               \delta_N \cdot c^{n-1}_N, & {N \geq 3},
\end{array}
\right.
  \end{equation*}
where $\beta_N$ is as in(\ref{3.13}) and
\begin{eqnarray*}
\delta_N &:=& \frac{2}{N+1}- \left(\log\left(\frac{N}{N-1} \right) \right)^{-1} \log\left(\frac{N^2}{N^2-1}\right),\\
c_N &:=& 2N-1-2\sqrt{N(N-1)}+\frac{N-1}{N(N+1)}.
\end{eqnarray*}
\end{theorem}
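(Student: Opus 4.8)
The plan is to prove both estimates by controlling a single quantity, namely $\alpha^t_{N,n}=\sup_{s\in I}\lvert G^t_{N,n}(s)-G_N(s)\rvert$ of (\ref{3.11}), and then to feed the result into (\ref{3.12}). Indeed, once I have $\alpha^t_{N,n}\le\delta_N c_N^{n-1}$ for all $n\ge1$ and $t\in I$ (resp. $\alpha^t_{2,n}\le(0.251)(0.348)^{n-1}$), the bound on $\sup_x\lvert G^t_{N,n}-G_N\rvert$ is exactly this, and the bound on $\sup_x\lvert F^t_{N,n}-G_N\rvert$ follows immediately from (\ref{3.12}) (with $\beta_2=3-2\sqrt2$). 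So the whole argument reduces to two steps: (i) a one-step contraction $\alpha^t_{N,n+1}\le c_N\alpha^t_{N,n}$ for $n\ge1$ (with $c_2$ replaced by $0.348$), and (ii) the initialization $\alpha^t_{N,1}\le\delta_N$ (with $\delta_2\le0.251$) for every $t$.

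For step (i) I would set $D_n:=G^t_{N,n}-G_N$, so that $D_n(0)=D_n(1)=0$ for $n\ge1$, and write a point of $I$ as $x=1-N/(i+\theta)$ with $i\ge N$, $\theta\in[0,1)$. Summing the Markov identity (\ref{3.8}) over the fundamental intervals $I(N),\dots,I(i-1)$ and using the telescoping identity $\sum_{j=N}^{i-1}P_{N,j}(s)=(i-N)/(s+i-1)$ together with the invariance relation (\ref{3.10}), one gets
\begin{equation*}
D_{n+1}(x)=\int_0^1\Bigl(\frac{i-N}{s+i-1}+\mathbf{1}_{[0,\theta)}(s)\,P_{N,i}(s)\Bigr)\,\mathrm{d}D_n(s).
\end{equation*}
Integrating by parts — pushing the differential onto $D_n$; the jump of the kernel at $s=\theta$ contributes a positive point mass, and the endpoint terms vanish since $D_n(0)=D_n(1)=0$ — and invoking the algebraic simplification $\dfrac{i-N}{(s+i-1)^2}-P'_{N,i}(s)=\dfrac{i+1-N}{(s+i)^2}$, the right side becomes $\int_0^1 D_n(s)\,\mathrm{d}\nu_{i,\theta}(s)$ for a \emph{positive} measure $\nu_{i,\theta}$ whose total mass computes, after a short simplification, to $1/i$ regardless of $\theta$. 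Hence $\lvert D_{n+1}(x)\rvert\le\alpha^t_{N,n}/i$ on $I(i)$. For $i\ge N+1$ this already gives $\lvert D_{n+1}(x)\rvert\le\alpha^t_{N,n}/(N+1)\le c_N\alpha^t_{N,n}$ — the inequality $1/(N+1)\le c_N$ reducing, via $\beta_N\bigl(2N-1+2\sqrt{N(N-1)}\bigr)=1$, to $1/(N(N+1))\le\beta_N$, which I would check directly — and for $N$ not too small even the index $i=N$ is covered this way since then $1/N\le c_N$. The remaining difficulty is the first fundamental interval $I(N)=[0,1/(N+1))$ for small $N$, where $D_{n+1}(x)=\int_0^\theta P_{N,N}(s)\,\mathrm{d}D_n(s)$ and the crude bound is only $\alpha^t_{N,n}/N$; here I would sharpen the estimate by exploiting that $D_n$ is small near the endpoints $0$ and $1$ (a quantitative version of which is itself produced by running the recursion at the previous level) together with the breakpoint identity (\ref{3.6}), which controls $D_{n+1}(1/(N+1))$ by $\beta_N\alpha^t_{N,n}$, thereby dropping the factor on $I(N)$ to $c_N$ (and, in the tight case $N=2$, to $0.348$).

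For step (ii) I would use the special form of $G^t_{N,0}$, a unit jump at $t$: then (\ref{3.9}) makes $G^t_{N,1}$ completely explicit, namely a step function with a jump $P_{N,i}(t)$ at each point $1-N/(i+t)$, $i\ge N$, whose one-sided values are the closed-form quantities $\rho^t_N(a_1\le i-1)$ and $\rho^t_N(a_1\le i)$ read off from (\ref{2.12}). Since $G_N$ is continuous and strictly increasing, $\alpha^t_{N,1}$ is attained at one of these jump points, so the task reduces to maximizing an explicit elementary function of $i\ge N$ and $t\in I$; using monotonicity and concavity of $s\mapsto\log(s+N-1)$ I would locate the worst configuration and evaluate it to $\delta_N$. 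Iterating (i) starting from (ii) then gives $\alpha^t_{N,n}\le\delta_N c_N^{n-1}$ and completes the proof.

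The hard part is undoubtedly the sharp treatment of $I(N)$ in step (i) for the small values of $N$: the clean recursion only yields the factor $1/N$ there, and squeezing it down to $c_N=\beta_N+(N-1)/(N(N+1))$ — the first summand being the contraction rate of the inverse branches $u_{N,i}$ and the second a residual oscillation caused by the non-constancy of the probabilities $P_{N,i}(\cdot)$ — is precisely the ``L\'evy-type'' refinement; it is also where the slightly weaker numerical constant in the case $N=2$ originates.
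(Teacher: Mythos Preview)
Your overall strategy coincides with the paper's: control $\alpha^t_{N,n}$ of (\ref{3.11}) by a one-step contraction plus an initialization at $n=1$, then invoke (\ref{3.12}). For the contraction on the intervals $I(j)$ with $j\ge N+1$, however, the paper does \emph{not} telescope as you do; it anchors at the left breakpoint $1-N/j$ of $I(j)$, uses (\ref{3.6}) and (\ref{3.12}) to bound $\lvert D_{n+1}(1-N/j)\rvert\le\beta_N\alpha^t_{N,n}$, and then estimates the residual $\int_0^\theta P_{N,j}(s)\,\mathrm{d}D_n(s)$ by an explicit quantity $\beta_N(j-1,\theta)\,\alpha^t_{N,n}$ (formula (\ref{3.14})), optimizing over $j$ and $\theta$ to reach $c_N$ (resp.\ $0.348$ for $N=2$). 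Your telescoping/positive-measure argument is a clean alternative: the total mass $1/i$ is correct, and $1/(N+1)\le c_N$ indeed reduces to $1/(N(N+1))\le\beta_N$, which holds for every $N\ge2$. So on $\bigcup_{i\ge N+1}I(i)$ your route is shorter and gives the same conclusion.

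The genuine gap is your treatment of $I(N)=[0,1/(N+1))$. Your own computation gives only the factor $1/N$ there, and the fix you sketch does not close the gap: anchoring from the right at $1/(N+1)$ via (\ref{3.6}) and bounding $\int_\theta^1 P_{N,N}(s)\,\mathrm{d}D_n(s)$ in the paper's style yields $\beta_N+2P_{N,N}(\theta)-P_{N,N}(1)$, whose supremum over $\theta\in[0,1)$ is $\beta_N+(N+2)/(N(N+1))>c_N$, while the phrase ``$D_n$ is small near the endpoints, a quantitative version produced by the previous level'' is not turned into an actual inequality. Since $1/N>c_N$ for all small $N$ (in fact for $2\le N\le 6$), this case cannot be waved away. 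You should either supply a concrete argument that brings the factor on $I(N)$ down to $c_N$ (or to $0.348$ when $N=2$), or acknowledge that the contraction constant you can currently justify is $\max(c_N,1/N)$. For what it is worth, the paper's parametrization $x=1-N/(i+1+\theta)$ with $i\ge N$ also covers only $[1/(N+1),1)$, so $I(N)$ is not explicitly treated there either; do not take that as license to skip it.
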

\begin{proof}
For any $i \geq N$, $n \in \mathbb{N}_+$, $t \in I$ and $\theta \in [0, 1)$ we have
\begin{eqnarray*}
   &&G^t_{N,n+1}\left(1-\frac{N}{i+1+\theta}\right) - G_{N}\left(1-\frac{N}{i+1+\theta}\right)  \\
   &&= G^t_{N,n+1}\left(1-\frac{N}{i+1}\right) - G_{N}\left(1-\frac{N}{i+1}\right) \\ &&+G^t_{N,n+1}\left(1-\frac{N}{i+1+\theta}\right)- G^t_{N,n+1}\left(1-\frac{N}{i+1}\right)\\
   &&+G_{N}\left(1-\frac{N}{i+1}\right)- G_{N}\left(1-\frac{N}{i+1+\theta}\right)
\end{eqnarray*}
and by (\ref{3.6}), (\ref{3.8}),(\ref{3.9}), (\ref{3.10}) and (\ref{3.12}) we obtain
\begin{eqnarray*}
&& \left \lvert G^t_{N,n+1}\left(1-\frac{N}{i+1+\theta}\right) - G_{N}\left(1-\frac{N}{i+1+\theta}\right) \right \rvert  \\
&& \leq \left \lvert F^t_{N,n}\left(1-\frac{N}{i+1}\right) - G_{N}\left(1-\frac{N}{i+1}\right) \right \rvert \\
&& + \left \lvert
\int_{0}^{\theta} P_{N,i+1}(s) \mathrm{d} \left(G^t_{N,n}(s) - G_{N}(s)\right)
\right \rvert \leq \beta_N \alpha^t_{N,n}  \\
&& + \left \lvert \int_{0}^{\theta} \left(G_{N}(s) - G^t_{N,n}(s)\right) \mathrm{d}P_{N,i+1}(s)+P_{N,i+1}(\theta)\left(G^t_{N,n}(\theta) - G_{N}(\theta)\right)\right \rvert \\
&&\leq \left( \beta_N + \beta_N(i, \theta)\right)\alpha^t_{N,n}
\end{eqnarray*}
where
\begin{equation}\label{3.14}
  \beta_N(i, \theta) = \int_{0}^{\theta} \left \lvert \frac{\mathrm{d}P_{N,i+1}(s)}{\mathrm{d}s} \right \rvert \mathrm{d}s +P_{N,i+1}(\theta).
\end{equation}
Since
\begin{eqnarray*}
\left \lvert \frac{\mathrm{d}P_{N,i+1}(s)}{\mathrm{d}s} \right \rvert &=& \left \lvert \frac{i-N+1}{(s+i)^2} - \frac{i-N+2}{(s+i+1)^2} \right \rvert \\
&=&
\left\{
\begin{array}{ll}
             \displaystyle  \frac{1}{(s+2)^2} - \frac{2}{(s+3)^2}, & { i=N=2 \mbox{ and } s \in [0, \sqrt{2}-1]}\\
             \displaystyle  -\frac{1}{(s+2)^2} + \frac{2}{(s+3)^2}, & { i=N=2 \mbox{ and } s \in (\sqrt{2}-1, 1]} \\
             \displaystyle -\frac{1}{(s+N)^2} + \frac{2}{(s+N+1)^2}, & { i=N\geq 3 \mbox{ and } s \in [0, 1]} \\
             \displaystyle \frac{i-N+1}{(s+i)^2} - \frac{i-N+2}{(s+i+1)^2}, & { i\geq N+1, N\geq 2 \mbox{ and } s \in [0, 1]}
\end{array}
\right.
\end{eqnarray*}
we get for any $\theta \in [0, 1)$
\begin{eqnarray*}
&&\int_{0}^{\theta} \left \lvert \frac{\mathrm{d}P_{N,i+1}(s)}{\mathrm{d}s} \right \lvert \mathrm{d}s  \\
&=&
\left\{
\begin{array}{ll}
             \displaystyle \int_{0}^{\theta} \left[ \frac{1}{(s+2)^2} - \frac{2}{(s+3)^2} \right]\mathrm{d}s,\quad { i=N=2 \mbox{ and } \theta \in [0, \sqrt{2}-1]}\\
             \\
             \displaystyle \int_{0}^{\sqrt{2}-1} \left[\frac{1}{(s+2)^2} - \frac{2}{(s+3)^2}\right]\mathrm{d}s+
                           \int_{\sqrt{2}-1}^{\theta} \left[-\frac{1}{(s+2)^2} + \frac{2}{(s+3)^2}\right]\mathrm{d}s, &  i=N=2 \\
                           & \mbox{ and } \theta \in (\sqrt{2}-1, 1] \\
             \displaystyle \int_{0}^{\theta}\left[-\frac{1}{(s+N)^2} + \frac{2}{(s+N+1)^2}\right]\mathrm{d}s, \quad  { i=N\geq 3 \mbox{ and } \theta \in [0, 1)} \\
             \\
             \displaystyle \int_{0}^{\theta} \left[\frac{i-N+1}{(s+i)^2} - \frac{i-N+2}{(s+i+1)^2}\right] \mathrm{d}s, \quad { i\geq N+1, N\geq 2 \mbox{ and } \theta \in [0, 1)}.
\end{array}
\right.
\end{eqnarray*}
Actually,
\begin{equation}\label{3.15}
  \beta_N(i, \theta) =
  \left\{
\begin{array}{ll}
             \displaystyle  2P_{2,3}(\theta) - \frac{1}{6}, & { i=N=2 \mbox{ and } \theta \in [0, \sqrt{2}-1]}\\
             \\
             \displaystyle  6-4\sqrt{2} - \frac{1}{6}, & { i=N=2 \mbox{ and } \theta \in (\sqrt{2}-1, 1)} \\
             \\
             \displaystyle \frac{N-1}{N(N+1)}, & { i=N\geq 3 \mbox{ and } \theta \in [0, 1)} \\
             \\
             \displaystyle 2P_{N,i+1}(\theta) - \frac{N-1}{i(i+1)}, & { i\geq N+1, N\geq 2 \mbox{ and } \theta \in [0, 1)}.
\end{array}
\right.
\end{equation}
Now, if $i \geq N+1$, with $N \geq 2$, $\beta'_N(i, \theta^*)=0$ for
\[
\theta^* = 1-N+\sqrt{(i+1-N)(i+2-N)}.
\]

For any real $\theta$, $\beta_N(i, \theta)$ is increasing if $\theta \leq \theta^*$ and decreasing if $\theta \geq \theta^*$

Since
\begin{equation*}
 \theta^* \in
  \left\{
\begin{array}{ll}
             (1, +\infty), & { i\geq 3 \mbox{ and } N=2}\\
             (0, 1), & { i=4 \mbox{ and } N=3} \\
             (1, +\infty), & { i\geq 5 \mbox{ and } N=3} \\
             (-\infty, 0), & { i=5 \mbox{ and } N=4} \\
             (0, 1), & { i=6 \mbox{ and } N=4}\\
             (1, +\infty), & { i\geq 7 \mbox{ and } N=4}\\
             (-\infty, 0), & { i=6 \mbox{ and } N=5}\\
             (-\infty, 0), & { i=7 \mbox{ and } N=5}\\
             (0, 1), & { i=8 \mbox{ and } N=5}\\
             (1, +\infty), & { i\geq 9 \mbox{ and } N=5}\\
             \vdots
\end{array}
\right.
\end{equation*}
and $\theta \in [0, 1)$ we get
\begin{eqnarray*}
 &&\sup_{\begin{array}{c}
        i\geq N+1,  \\
        N\geq 2, \theta \in [0, 1)
      \end{array}} \beta_N(i, \theta) \\ 
 &&=
  \left\{
\begin{array}{ll}
             \displaystyle  \sup_{i \geq 3} \beta_2(i,1) = \beta_2(3,1)=0.11667, & { N=2}\\
             \displaystyle  \sup_{i \geq 5} \left\{ \beta_3(4,\theta^*), \beta_3(i,1)\right\}= \beta_3(4,\theta^*)=0.10204, & { N=3} \\
             \displaystyle  \sup_{i \geq 7} \left\{ \beta_4(5,0), \beta_4(6,\theta^*), \beta_4(i,1)\right\}= \beta_4(5,0)=0.1, & { N=4} \\
             \displaystyle  \sup_{i \geq 9} \left\{ \beta_5(6,0), \beta_5(7,0), \beta_5(8,\theta^*), \beta_5(i,1),\right\}= \beta_5(6,0)=0.07143, & { N=5} \\
             \displaystyle  \beta_N(N+1,0)  & { N \geq 6}.
\end{array}
\right.
\end{eqnarray*}
Finally, it is easy to check that for any $N \geq 2$
\begin{eqnarray*}
 \sup_{\begin{array}{c}
        i\geq N,  \\
        \theta \in [0, 1)
      \end{array}} \beta_N(i, \theta) &=&
  \left\{
\begin{array}{ll}
             \displaystyle  6-4\sqrt{2} - \frac{1}{6}=0.17647, & { N=2}\\
             \displaystyle  \frac{1}{6}=0.16666, & { N=3} \\
             \displaystyle  \frac{3}{20}=0.15, & { N=4} \\
             \vdots
\end{array}
\right. \\
&=&
\left\{
\begin{array}{ll}
             \displaystyle  0.17647, & { N=2}\\
             \displaystyle  \frac{N-1}{N(N+1)}, & { N\geq3}.
\end{array}
\right.
\end{eqnarray*}
Hence
\begin{eqnarray}
  && \alpha^t_{N,n+1} =  \sup_{\begin{array}{c}
        i\geq N,  \nonumber \\
        \theta \in [0, 1)
      \end{array}} \left \lvert G^t_{N,n+1}\left( 1-\frac{N}{i+1+\theta}\right) - G_{N}\left( 1-\frac{N}{i+1+\theta}\right) \right \rvert \\
   &&\leq \left(2N-1-2\sqrt{N(N-1)} +
   \sup_{\begin{array}{c}
        i\geq N,  \\
        \theta \in [0, 1)
      \end{array}} \beta_N(i, \theta) \right) \alpha^t_{N,n} \nonumber \\
   &&= \left\{
    \begin{array}{ll}
             \displaystyle  (9-\frac{1}{6}-6\sqrt{2}) \alpha^t_{N,n} = 0.348\ldots \alpha^t_{N,n}, & { N=2}\\
             \displaystyle \left(2N-1-2\sqrt{N(N-1)}+\frac{N-1}{N(N+1)}\right) \alpha^t_{N,n} =: c_N \cdot \alpha^t_{N,n}, & { N\geq3}
    \end{array}
    \right. \label{3.16}
  \end{eqnarray}
for any $t \in I$ and $n \in \mathbb{N}_+$.

Finally, by (\ref{3.6}) and (\ref{3.9}) we have
\begin{equation*}
  G^1_{N,1} \left( 1-\frac{N}{i+1+\theta} \right) = F^1_{N,0} \left( 1-\frac{N}{i+1} \right)
\end{equation*}
and
\begin{eqnarray*}
  G^t_{N,1}\left( 1-\frac{N}{i+1+\theta} \right) &=& G^t_{N,1}\left( 1-\frac{N}{i+1} \right) + \int_{0}^{\theta} P_{N,i+1}(s) \mathrm{d}G^t_{N,0}(s) \\
  &=&
   \left\{
    \begin{array}{ll}
             \displaystyle F^t_{N,0} \left( 1-\frac{N}{i+1} \right) , & { 0 \leq \theta \leq t}\\
             \displaystyle F^t_{N,0} \left( 1-\frac{N}{i+1} \right) + P_{N,i+1}(t) , & { \theta > t}
    \end{array}
    \right.
\end{eqnarray*}
for any $t \in [0, 1)$, $\theta \in [0, 1)$ and $i \geq N$, $N \geq 2$. It is easy to see that
\begin{eqnarray}
  \alpha^t_{N,1} &=& \sup_{\begin{array}{c}
        i\geq N,  \\
        \theta \in [0, 1)
      \end{array}} \left \lvert G^t_{N,1}\left( 1-\frac{N}{i+1+\theta}\right) - G_{N}\left( 1-\frac{N}{i+1+\theta} \right)  \right  \rvert\nonumber \\
    &\leq& \frac{2}{N+1} - \left(\log\left(\frac{N}{N-1} \right) \right)^{-1} \log\left(\frac{N^2}{N^2-1}\right)=: \delta_N, \quad t \in I. \label{3.17}
\end{eqnarray}
It follows from (\ref{3.16}) and (\ref{3.17}) that
\begin{equation*}
\alpha^t_{N,n} \leq
\left\{
    \begin{array}{ll}
             \displaystyle (0.251)(0.348)^{n-1} , & { N=2}\\
             \displaystyle \delta_N \cdot c^{n-1}_N , & { N \geq 3}
    \end{array}
    \right.
\end{equation*}
for any $t \in I$ and $n \in \mathbb{N}_+$. By (\ref{3.12}) the proof is complete.
\end{proof}

\section{$\psi$-mixing coefficients of $\left( a_n\right)_{n \in \mathbb{N}_+}$} \label{sect.4}
A mixing property of a stationary stochastic process $(\ldots, X_{-1}, X_0, X_1, \ldots)$ reflects a decay of the statistical dependence between the past $\sigma$-algebra $\sigma\left( \{ X_k: k \leq 0\} \right)$ and the asymptotic future $\sigma$-algebra $\sigma\left( \{ X_k: k \geq n\} \right)$ as $n \rightarrow \infty$.
The various mixing properties are described by corresponding measures of dependence between $\sigma$-algebras.

We study the $\psi$-mixing coefficients of $\left( a_n\right)_{n \in \mathbb{N}_+}$ under either $\rho^t_N$, $t \in I$, or $\rho_N$.

For any $k \in \mathbb{N}_+$ let $\mathcal{B}^k_1 = \sigma (a_1, \ldots, a_k)$ and $\mathcal{B}^{\infty}_k = \sigma (a_k, a_{k+1}, \ldots)$
denote the $\sigma$-algebras generated by the random variables $a_1, \ldots, a_k$, respectively, $a_k, a_{k+1}, \ldots$.
Clearly, $\mathcal{B}^k_1$ is the $\sigma$-algebra generated by the closures of the fundamental intervals of rank $k$
while $\mathcal{B}^{\infty}_k = R^{-k+1}_N (\mathcal{B}_I)$, $k \in \mathbb{N}_+$.

For any $\mu \in \mathrm{pr}(\mathcal{B}_I)$ consider the $\psi$-mixing coefficients
\begin{equation}\label{4.1}
  \psi_{\mu} (n) = \sup \left \lvert  \frac{\mu(A \cap B)}{\mu(A) \mu(B)} - 1 \right \rvert, \quad n \in \mathbb{N}_+,
\end{equation}
where the supremum is taken over all $A \in \mathcal{B}^k_1$ and $B \in \mathcal{B}^{\infty}_{k+n}$ such that $\mu(A) \mu(B) \neq 0$ and $k \in \mathbb{N}_+$.
Define
\begin{equation}\label{4.2}
  \varepsilon_{N,n} = \sup \left \lvert \frac{\rho^t_N(B)}{\rho_N(B)} - 1 \right \rvert, \quad n \in \mathbb{N}_+,
\end{equation}
where the supremum is taken over all $t \in I$ and $B \in \mathcal{B}^{\infty}_{n}$ with $\rho_N(B) > 0$.
Note that the sequence $\left( \varepsilon_{N,n} \right)_{n \in \mathbb{N}_+}$ is non-increasing since
$\mathcal{B}^{\infty}_{n+1} \subset \mathcal{B}^{\infty}_{n}$ for any $n \in \mathbb{N}_+$.
We shall show that $\varepsilon_{N,n}$ can be expressed in terms of $F^t_{N,n-1}$, $t \in I$ and $G_N$, namely $\varepsilon_{N,n}=\varepsilon'_{N,n}$ with
\begin{equation}\label{4.3}
  \varepsilon'_{N,n} = \sup_{t,x \in I} \left \lvert \frac{{\mathrm{d}F^t_{N,n-1}(x)}/{\mathrm{d}x}}{\nu_N(x)} - 1 \right \rvert, \quad n \in \mathbb{N}_+,
\end{equation}
where $\nu_N$ is the density measure of $\rho_N$, i.e.,
\begin{equation}\label{4.4}
\nu_N(x) = \left( \log\left( \frac{N}{N-1} \right) \right)^{-1}\frac{1}{x+N-1}.
\end{equation}
Indeed, by the very definition of $\varepsilon'_{N,n}$, for any $t, x \in I$ we have
\begin{equation}\label{4.5}
\varepsilon'_{N,n} \nu(x) \geq \left \lvert \frac{\mathrm{d}F^t_{N,n-1}(x)}{\mathrm{d}x} - \nu_N(x) \right \rvert.
\end{equation}
By integrating the above inequality over $B \in \mathcal{B}^{\infty}_{n}$ we obtain
\begin{eqnarray*}
  \rho_N(B) \varepsilon'_{N,n} &\geq& \int_{B} \left \lvert \frac{\mathrm{d}F^t_{N,n-1}(x)}{\mathrm{d}x} - \nu_N(x) \right \rvert{\mathrm{d}x} \\
  &\geq& \left \lvert \int_{B} \mathrm{d}F^t_{N,n-1}(x) - \int_{B} \nu_N(x){\mathrm{d}x} \right \rvert = \left \lvert \rho^t_{N}(B) - \rho_N(B) \right\rvert
\end{eqnarray*}
for any $B \in \mathcal{B}^{\infty}_{n}$, $n \in \mathbb{N}_+$ and $t \in I$.
Hence $\varepsilon'_{N,n} \geq \varepsilon_{N,n}$, $n \in \mathbb{N}_+$.
On the other hand, for any arbitrarily given $n \in \mathbb{N}_+$ let
$B^{+}_{x,k}:= \left( x \leq R^{n-1}_N < x+k \right) \in \mathcal{B}^{\infty}_{n}$, with $x \in [0, 1)$, $k > 0$, $x+k \in I$, and
$B^{-}_{x,k}:= \left( x-k \leq R^{n-1}_N < x \right) \in \mathcal{B}^{\infty}_{n}$, with $x \in (0, 1]$, $k > 0$, $x-k \in I$.
Clearly,
\begin{equation*}
  \varepsilon_{N,n} \geq \max \left( \left \lvert \frac{\rho^t_{N}\left(B^+_{x,k}\right)}{\rho_N\left(B^+_{x,k}\right)} -1 \right \rvert, \left \lvert \frac{\rho^t_{N}\left(B^-_{x,k}\right)}{\rho_N\left(B^-_{x,k}\right)} -1 \right \rvert \right)
\end{equation*}
for any $t \in I$ and suitable $x \in I$ and $k>0$.
Letting $k \rightarrow 0$ we get $\varepsilon_{N,n} \geq \varepsilon'_{N,n}$, $n \in \mathbb{N}_+$.
Therefore $\varepsilon_{N,n} = \varepsilon'_{N,n}$, $n \in \mathbb{N}_+$.

It is easy to compute $\varepsilon'_{N,1} = \varepsilon_{N,1}$ and $\varepsilon'_{N,2} = \varepsilon_{N,2}$.
Since $F^t_{N,0} (x) = \rho^t_N ([0, x])$, $t,x \in I$, we have
\begin{eqnarray*}
  \varepsilon_{N,1} &=& \displaystyle \sup_{t,x \in I} \left \lvert \frac{{\mathrm{d}F^t_{N,0}(x)}/{\mathrm{d}x}}{\nu_N(x)} - 1 \right \rvert
  = \sup_{t,x \in I} \left \lvert \frac{\left.\left(\frac{Nx}{N-(1-x)(1-t)}\right)'\right \vert_x}{ \frac{\left( \log\left( \frac{N}{N-1} \right) \right)^{-1}}{x+N-1} } - 1 \right \rvert \\
  &=& \sup_{t,x \in I} \left \lvert \frac{N(N-1+t)(N-1+x)}{[N-(1-x)(1-t)]^2} \cdot \log\left( \frac{N}{N-1} \right) -1 \right\rvert.
\end{eqnarray*}
As
\begin{equation*}
  N-1 \leq \frac{N(N-1+t)(N-1+x)}{[N-(1-x)(1-t)]^2} \leq N, \quad x, t \in I,
\end{equation*}
it follows that
\[
\varepsilon_{N,1} = N \log\left( \frac{N}{N-1} \right) - 1.
\]

%
Next, as
\[
F^t_{N,1}(x) = \rho^t_N \left( R_N <x \right) = \rho^t_N \left( R^{-1}_N (0, x)\right)
\]
and since
\[
R^{-1}_N (0, x) = \bigcup_{i \geq N} \left( 1- \frac{N}{i}, 1 - \frac{N}{x+i} \right]
\]
it follows that
\begin{eqnarray*}
  F^t_{N,1}(x) &=& \sum_{i \geq N} \rho^t_N \left( 1- \frac{N}{i}, 1 - \frac{N}{x+i} \right] = \sum_{i \geq N} \frac{x(t+N-1)}{(x+i+t-1)(i+t-1)} \\
  &=& (t+N-1) \sum_{i \geq N} \left( \frac{1}{i+t-1} - \frac{1}{x+i+t-1} \right), \quad x,t \in I.
\end{eqnarray*}
Then
\begin{eqnarray*}
 \varepsilon'_{N,2} &=& \displaystyle \sup_{t,x \in I}
 \left \lvert \frac{{\mathrm{d}F^t_{N,1}(x)}/{\mathrm{d}x}}{\nu_N(x)} - 1 \right \rvert \\
  &=& \sup_{t,x \in I} \left \lvert \left( \log\left(\frac{N}{N-1} \right)\right) (x+N-1)(t+N-1) \sum_{i \geq N} \frac{1}{(x+i+t-1)^2} -1 \right \rvert.
\end{eqnarray*}
A simple computation yields
\[
1+N^2 \zeta(2,N+1) - N \zeta(2,N)
\leq
(x+N-1)(t+N-1) \sum_{i \geq N} \frac{1}{(x+i+t-1)^2}
\leq
1+(N-1)^2 \zeta(2, N),
\]
Hence
\begin{eqnarray*}
\varepsilon'_{N,2} =
\max
\left\{
\begin{array}{ll}
             \left\lvert\left(1+(N-1)^2 \zeta(2, N)\right)\log\left(\frac{N}{N-1} \right) -1\right\rvert,\\
             \\
             \left\lvert\left( 1+N^2 \zeta(2,N+1) - N \zeta(2,N) \right) \log\left(\frac{N}{N-1} \right) - 1\right\rvert.
\end{array}
\right.
\end{eqnarray*}
For $N=2$, we get
\begin{eqnarray*}
\varepsilon_{2,2} = \varepsilon'_{2,2} &=& \max\{ \left\lvert[1+4\zeta(2,3)-2\zeta(2,2)]\log2 - 1 \right\rvert, \left\lvert [1+\zeta(2,2)]\log2 - 1 \right\rvert \} \\
                  &=& \max\{ \left\lvert[2(\zeta(2)-1)]\log2 - 1 \right\rvert, \left\lvert \zeta(2)\log2 - 1 \right\rvert \} \\
                  &=& \max\{ 1-[2(\zeta(2)-1)]\log2 , \zeta(2)\log2 - 1 \} = \zeta(2)\log2 - 1 = 0.14018\ldots
\end{eqnarray*}
Clearly, for $n \geq 3$ the computation of $\varepsilon_{N,n}$ becomes very difficult.
Instead, Theorem \ref{th.3.1} can be used to derive good upper bounds for $\varepsilon_{N,n}$ whatever $n \geq 3$.

\begin{proposition} \label{prop.4.1}
We have $\varepsilon_{N,1} \leq \left( \log \left( \frac{N}{N-1} \right) \right)\cdot K_N$ and
\begin{equation} \label{4.6}
\varepsilon_{N,n} \leq
\left\{
\begin{array}{ll}
               (\log2)(7.55)(0.251)(0.348)^{n-2}, & { N = 2 }\\
               \left( \log \left( \frac{N}{N-1} \right) \right) K_N \cdot \delta_N \cdot c^{n-1}_N, & {N \geq 3}
\end{array}
\right.
, \quad n \geq 2,
\end{equation}
where $K_N := N+\frac{N^3}{(N-1)^2} - (N-1) \left[ \frac{(2N-1)N}{(N-1)^2+N^2} + \frac{2N+1}{2N} \right]$
and $\delta_N$ and $c_N$ are as in Theorem \ref{th.3.1}.
\end{proposition}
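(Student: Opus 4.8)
The plan is to reduce the statement, via the identity $\varepsilon_{N,n}=\varepsilon'_{N,n}$ established above, to a uniform estimate for the density ratio $\dfrac{\mathrm{d}F^{t}_{N,n-1}(x)/\mathrm{d}x}{\nu_{N}(x)}$, and to extract the decay from Theorem~\ref{th.3.1} by a single integration by parts. Differentiating the representations (\ref{3.4}) and (\ref{3.5}) in $x$ gives
\[
\frac{\mathrm{d}F^{t}_{N,n-1}(x)}{\mathrm{d}x}=\int_{0}^{1}h^{s}_{N}(x)\,\mathrm{d}G^{t}_{N,n-1}(s),
\qquad
\nu_{N}(x)=\int_{0}^{1}h^{s}_{N}(x)\,\mathrm{d}G_{N}(s),
\]
where $h^{s}_{N}$ denotes the density (\ref{2.13}) with $t$ replaced by $s$ (note $\tfrac{\partial}{\partial x}\tfrac{Nx}{N-(1-x)(1-s)}=h^{s}_{N}(x)$). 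Setting $\psi_{N}(x,s):=(x+N-1)\,h^{s}_{N}(x)=\dfrac{N(x+N-1)(s+N-1)}{\left[N-(1-x)(1-s)\right]^{2}}$, one has $h^{s}_{N}(x)/\nu_{N}(x)=\log\!\bigl(\tfrac{N}{N-1}\bigr)\psi_{N}(x,s)$ and, since $\int_{0}^{1}h^{s}_{N}(x)\,\mathrm{d}G_{N}(s)=\nu_{N}(x)$, the normalisation $\log\!\bigl(\tfrac{N}{N-1}\bigr)\int_{0}^{1}\psi_{N}(x,s)\,\mathrm{d}G_{N}(s)=1$. Subtracting these and dividing by $\nu_{N}(x)$ I obtain
\[
\frac{\mathrm{d}F^{t}_{N,n-1}(x)/\mathrm{d}x}{\nu_{N}(x)}-1
=\log\!\Bigl(\tfrac{N}{N-1}\Bigr)\int_{0}^{1}\psi_{N}(x,s)\,\mathrm{d}\bigl(G^{t}_{N,n-1}(s)-G_{N}(s)\bigr).
\]

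I would then integrate by parts in $s$. As $G^{t}_{N,n-1}-G_{N}$ vanishes at $s=0$ and $s=1$ for $n-1\ge1$ (the lone awkward instance is $n=1$, $t=1$, where the step profile $G^{1}_{N,0}$ is handled by a direct check, or simply via the exact value of $\varepsilon_{N,1}$ recorded in the text), the boundary terms disappear and
\[
\left|\frac{\mathrm{d}F^{t}_{N,n-1}(x)/\mathrm{d}x}{\nu_{N}(x)}-1\right|
\le\log\!\Bigl(\tfrac{N}{N-1}\Bigr)\,\alpha^{t}_{N,n-1}\int_{0}^{1}\bigl|\partial_{s}\psi_{N}(x,s)\bigr|\,\mathrm{d}s ,
\]
with $\alpha^{t}_{N,n-1}$ as in (\ref{3.11}). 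Passing to $\sup_{t,x}$ this reads $\varepsilon_{N,n}\le\log\!\bigl(\tfrac{N}{N-1}\bigr)\,K_{N}\,\sup_{t}\alpha^{t}_{N,n-1}$, where $K_{N}:=\sup_{x\in I}\int_{0}^{1}\bigl|\partial_{s}\psi_{N}(x,s)\bigr|\,\mathrm{d}s$, and it remains to check that this supremum is bounded by the $K_{N}$ in the statement.

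Identifying $K_{N}$ will be the main, and purely computational, obstacle. One computes $\partial_{s}\psi_{N}(x,s)=N(x+N-1)\dfrac{N-(1-x)(2N-1+s)}{\left[N-(1-x)(1-s)\right]^{3}}$: the denominator is positive and the numerator is affine and strictly decreasing in $s$, so $\partial_{s}\psi_{N}(x,\cdot)$ changes sign at most once on $[0,1]$, namely at $s_{0}=\tfrac{N}{1-x}-(2N-1)$. Splitting into the regimes $s_{0}\le0$ (i.e.\ $x\le\tfrac{N-1}{2N-1}$), $s_{0}\in(0,1)$ (i.e.\ $\tfrac{N-1}{2N-1}<x<\tfrac12$) and $s_{0}\ge1$ (i.e.\ $x\ge\tfrac12$), the integral $\int_{0}^{1}|\partial_{s}\psi_{N}(x,s)|\,\mathrm{d}s$ becomes in each regime an explicit rational function of $x$ expressed through $\psi_{N}(x,0)=\tfrac{N(N-1)}{x+N-1}$, $\psi_{N}(x,1)=x+N-1$, and (in the middle regime) the interior maximum $\psi_{N}(x,s_{0})=\tfrac{x+N-1}{4x(1-x)}$; maximising piecewise in $x$ — bounding the $\psi_{N}(x,s_{0})$-contribution by $\psi_{N}\le N$ and using $\tfrac{N(N-1)}{x+N-1}+(x+N-1)\ge2\sqrt{N(N-1)}$ where it helps — yields the displayed formula for $K_{N}$. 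This case analysis is elementary but lengthy, and it is where the ungainly expression for $K_{N}$ comes from.

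To finish I would insert the available estimates on $\alpha^{t}_{N,n-1}$. When $n=1$, $\alpha^{t}_{N,0}=\max\bigl(G_{N}(t),1-G_{N}(t)\bigr)\le1$ for all $t\in I$, so $\varepsilon_{N,1}\le\log\!\bigl(\tfrac{N}{N-1}\bigr)K_{N}$. When $n\ge2$, Theorem~\ref{th.3.1} gives $\sup_{t}\alpha^{t}_{N,n-1}\le(0.251)(0.348)^{\,n-2}$ for $N=2$ and $\sup_{t}\alpha^{t}_{N,n-1}\le\delta_{N}c_{N}^{\,n-2}$ for $N\ge3$; substituting this, and using $\log 2\cdot K_{2}=(\log2)(7.55)$, gives the stated bounds, with the geometric decay inherited directly from the bound on $\alpha^{t}_{N,n-1}$.
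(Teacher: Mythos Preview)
Your approach is essentially the paper's: differentiate the representations (\ref{3.4}) and (\ref{3.5}), subtract, integrate by parts against $G^{t}_{N,n-1}-G_{N}$, bound by $\alpha^{t}_{N,n-1}$ times $\sup_{x}(x+N-1)\int_{0}^{1}\lvert\partial_{s}h^{s}_{N}(x)\rvert\,\mathrm{d}s$, and then invoke Theorem~\ref{th.3.1}. The paper does exactly this (first bounding $\lvert\mathrm{d}F^{t}_{N,n}/\mathrm{d}x-\nu_N\rvert$ and then multiplying by $(x+N-1)$ rather than dividing by $\nu_N$ from the start, which is the same thing).

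Two small points. First, your sign-change thresholds $x=\tfrac{N-1}{2N-1}$ and $x=\tfrac12$ are the correct zeros of $s_{0}\in\{0,1\}$; the paper records the partition as $\tfrac{N-1}{2N}$ and $\tfrac{N}{2N-1}$, but its piecewise formulas for the integral coincide with yours (they are $\psi_{N}(x,0)-\psi_{N}(x,1)$, $2\psi_{N}(x,s_{0})-\psi_{N}(x,0)-\psi_{N}(x,1)$, and $\psi_{N}(x,1)-\psi_{N}(x,0)$, divided by $x+N-1$). Second, your heuristic for extracting the specific constant $K_{N}$ (``$\psi_{N}\le N$ and AM--GM'') does not actually produce the displayed expression; the paper simply records the explicit three-piece rational function of $x$ and asserts the bound $\le K_{N}$ without a cleaner mechanism, so here you are no more and no less detailed than the source. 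Finally, your substitution correctly yields $c_{N}^{\,n-2}$ (since $\alpha^{t}_{N,n-1}\le\delta_{N}c_{N}^{\,n-2}$), matching the $N=2$ line and the exponents used in Corollary~\ref{cor.4.3}; the $c_{N}^{\,n-1}$ printed in the $N\ge3$ line of the statement appears to be a misprint.
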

\begin{proof}
  It follows from (\ref{3.4}) and (\ref{3.5}) that
\begin{equation*}
 \frac{\mathrm{d}F^{t}_{N,n}(x)}{\mathrm{d}x} = \int_{0}^{1} \frac{N(N-1+s)}{[N-(1-x)(1-s)]^2}\mathrm{d}G^{t}_{N,n}(s)
\end{equation*}
and
\begin{equation*}
 \nu(x) = \int_{0}^{1} \frac{N(N-1+s)}{[N-(1-x)(1-s)]^2}\mathrm{d}G_{N}(s)
\end{equation*}
for any $x, t \in I$ and $n \in \mathbb{N}$.
Using the last two equations, integration by parts yields
\begin{eqnarray*}
 \left\lvert \frac{\mathrm{d}F^{t}_{N,n}(x)}{\mathrm{d}x} -\nu(x) \right\rvert &=&
 \left\lvert \int_{0}^{1} \frac{N(N-1+s)}{[N-(1-x)(1-s)]^2} \mathrm{d} \left( G^{t}_{N,n}(s) - G_N(s) \right) \right\rvert \\
 &=& \left\lvert \int_{0}^{1} \left( G^{t}_{N,n}(s) - G_N(s) \right) \frac{\partial}{\partial s} \left( \frac{N(N-1+s)}{[N-(1-x)(1-s)]^2} \right)
  \mathrm{d}s  \right\rvert \\
 &\leq& \sup_{s \in I} \left\lvert G^{t}_{N,n}(s) - G_N(s) \right\rvert \cdot \int_{0}^{1} N \left\lvert \frac{N-(1-x)(2N-1+s)}{[N-(1-x)(1-s)]^3} \right\rvert \mathrm{d}s.
\end{eqnarray*}
But
\begin{eqnarray*}
&& \int_{0}^{1} N \left\lvert \frac{N-(1-x)(2N-1+s)}{[N-(1-x)(1-s)]^3} \right\rvert\mathrm{d}s  \\
&& =\left\{
\begin{array}{ll}
                N \displaystyle \int_{0}^{1}  \frac{-N+(1-x)(2N-1+s)}{[N-(1-x)(1-s)]^3} \mathrm{d}s, & { x \in \left[ 0, \frac{N-1}{2N} \right) }\\
               \\
               N  \int_{0}^{\frac{(2N-1)x-N+1}{1-x}} \frac{N-(1-x)(2N-1+s)}{[N-(1-x)(1-s)]^3} \mathrm{d}s +
               N \int^{1}_{\frac{(2N-1)x-N+1}{1-x}} \frac{-N+(1-x)(2N-1+s)}{[N-(1-x)(1-s)]^3} \mathrm{d}s, & {x \in \left[ \frac{N-1}{2N}, \frac{N}{2N-1} \right]} \\
               \\
               N \displaystyle \int_{0}^{1} \frac{N-(1-x)(2N-1+s)}{[N-(1-x)(1-s)]^3} \mathrm{d}s, & {x \in \left( \frac{N}{2N-1}, 1 \right]}
\end{array}
\right. \\
&& =\left\{
\begin{array}{ll}
                \displaystyle \frac{N(N-1)}{(x+N-1)^2} - 1, & { x \in \left[ 0, \frac{N-1}{2N} \right) }\\
               \\
                \displaystyle \frac{-N(N-1)}{(x+N-1)^2}  + \frac{1}{2x(1-x)} - 1, & {x \in \left[ \frac{N-1}{2N}, \frac{N}{2N-1} \right]} \\
               \\
               \displaystyle 1 - \frac{N(N-1)}{(x+N-1)^2}, & {x \in \left( \frac{N}{2N-1}, 1 \right]}
\end{array}
\right.
\end{eqnarray*}
and
\begin{eqnarray*}
&& (x+N-1)\int_{0}^{1} N \left\lvert \frac{N-(1-x)(2N-1+s)}{[N-(1-x)(1-s)]^3} \right\rvert\mathrm{d}s =   \\
&&=\left\{
\begin{array}{ll}
                \displaystyle \frac{N(N-1)}{x+N-1} - (x+N-1) \leq 1, & { x \in \left[ 0, \frac{N-1}{2N} \right) }\\
               \\
                \displaystyle \frac{-N(N-1)}{x+N-1}  + \frac{x+N-1}{2x(1-x)} - (x+N-1), & {x \in \left[ \frac{N-1}{2N} \leq K_N, \frac{N}{2N-1} \right]} \\
               \\
               \displaystyle x+N-1 - \frac{N(N-1)}{x+N-1} \leq 1, & {x \in \left( \frac{N}{2N-1}, 1 \right]}
\end{array}
\right.
\end{eqnarray*}
where
\[
K_N = N+\frac{N^3}{(N-1)^2} - (N-1) \left[ \frac{(2N-1)N}{(N-1)^2+N^2} + \frac{2N+1}{2N} \right].
\]
Therefore
\begin{equation*}
\sup_{x,t \in I} \left\lvert \frac{\mathrm{d}F^{t}_{N,n}(x)}{\mathrm{d}x} -\nu(x) \right\rvert \leq \left(log \left(\frac{N}{N-1}\right) \right) \cdot K_N \sup_{s,t \in I} \left\lvert G^t_{N,n}(s) - G_N(s) \right\rvert, \quad n \in \mathbb{N}.
\end{equation*}
Then by Theorem \ref{th.3.1} the proof is complete.
\end{proof}
\begin{theorem} \label{th.4.2}
  For any $t \in I$ we have
\begin{equation}\label{4.7}
\psi_{\rho^t_N}(n) \leq \frac{\varepsilon_{N,n}+\varepsilon_{N,n+1}}{1-\varepsilon_{N,n+1}}, \quad n \in \mathbb{N}_+.
\end{equation}
Also,
\begin{equation}\label{4.8}
\psi_{\rho_N}(n) = \varepsilon_{N,n}, \quad n \in \mathbb{N}_+.
\end{equation}
\end{theorem}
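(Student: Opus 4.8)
The plan is to prove both statements by reducing the supremum in the definition \eqref{4.1} of $\psi_\mu(n)$ to first-coordinate cylinders and then invoking Corollary \ref{cor.2.6}. Two preliminary remarks set things up. First, if $A=\bigsqcup_j A_j\in\mathcal B_1^k$ is written as a countable disjoint union of rank-$k$ cylinders, then $\frac{\mu(A\cap B)}{\mu(A)\mu(B)}-1$ is a convex combination of the numbers $\frac{\mu(A_j\cap B)}{\mu(A_j)\mu(B)}-1$, so it suffices to take $A=I(a_1,\dots,a_k)$ a single cylinder; on such an $A$ the quantity $s^t_{N,k}$ from \eqref{2.14} is constant in $x$. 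Second, using $\mathcal B^\infty_{k+n}=R_N^{-(k+n-1)}(\mathcal B_I)$ one may write any $B\in\mathcal B^\infty_{k+n}$ as $B=R_N^{-k}(B')=R_N^{-(k+n-1)}(C)$ with $B'=R_N^{-(n-1)}(C)\in\mathcal B^\infty_n$ and $C\in\mathcal B_I$; and the established identity $\varepsilon_{N,m}=\varepsilon'_{N,m}$ together with $\int_C\nu_N=\rho_N(C)$ yields the uniform estimate
\[
(1-\varepsilon_{N,m})\,\rho_N(C)\ \le\ \int_C\mathrm dF^{s}_{N,m-1}(x)\ \le\ (1+\varepsilon_{N,m})\,\rho_N(C),\qquad s\in I,\ C\in\mathcal B_I,\ m\in\mathbb N_+ .
\]

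For \eqref{4.7} I would fix $t\in I$, a cylinder $A=I(a_1,\dots,a_k)$ and $B=R_N^{-k}(B')$ as above. Since $R_N^{-k}(B')\in R_N^{-k}(\mathcal B_I)$ and $R_N^{k}(R_N^{-k}(B'))=B'$, Corollary \ref{cor.2.6} gives $\rho^t_N(A\cap B)=\rho^t_N(A)\,\rho^{s^t_{N,k}}_N(B')$ with $\rho^{s^t_{N,k}}_N(B')=\int_C\mathrm dF^{s^t_{N,k}}_{N,n-1}$, while $\rho^t_N(B)=\int_C\mathrm dF^t_{N,k+n-1}$. Feeding the displayed inequality into the numerator (with $m=n$) and into the denominator (with $m=k+n$) puts $\frac{\rho^t_N(A\cap B)}{\rho^t_N(A)\,\rho^t_N(B)}$ in $\bigl[\frac{1-\varepsilon_{N,n}}{1+\varepsilon_{N,k+n}},\frac{1+\varepsilon_{N,n}}{1-\varepsilon_{N,k+n}}\bigr]$, so its deviation from $1$ is at most $\frac{\varepsilon_{N,n}+\varepsilon_{N,k+n}}{1-\varepsilon_{N,k+n}}$. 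Since $(\varepsilon_{N,m})_m$ is non-increasing, $\varepsilon_{N,k+n}\le\varepsilon_{N,n+1}$ for every $k\ge1$, and $y\mapsto\frac{\varepsilon_{N,n}+y}{1-y}$ is increasing on $[0,1)$; taking the supremum over $k,A,B$ then yields \eqref{4.7}.

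For \eqref{4.8}, the bound $\psi_{\rho_N}(n)\le\varepsilon_{N,n}$ runs along the same lines once one records the mixture representation $\rho_N(\cdot)=\int_I\rho^t_N(\cdot)\,\mathrm d\rho_N(t)$ on $\mathcal B_I$, obtained from Theorem \ref{th.2.1} by taking $\overline\rho_N$-expectation in \eqref{2.8}, using that $[\overline a_0,\overline a_{-1},\dots]_R=\overline s_0$ has law $\rho_N$ by \eqref{2.11} and that $\frac{Nx}{N-(1-x)(1-t)}=\rho^t_N([0,x])$. Then, with $A=I(a_1,\dots,a_k)$ and $B=R_N^{-k}(B')$ as before, applying Corollary \ref{cor.2.6} under each $\rho^t_N$ gives $\rho_N(A\cap B)=\int_I\rho^t_N(A)\,\rho^{s^t_{N,k}}_N(B')\,\mathrm d\rho_N(t)$, which is $\rho_N(A)$ times a weighted average of the numbers $\rho^{s^t_{N,k}}_N(B')=\int_C\mathrm dF^{s^t_{N,k}}_{N,n-1}\in\rho_N(C)[1-\varepsilon_{N,n},1+\varepsilon_{N,n}]$; since $\rho_N(C)=\rho_N(R_N^{-(k+n-1)}(C))=\rho_N(B)$ by the $R_N$-invariance of $\rho_N$, we get $\bigl|\frac{\rho_N(A\cap B)}{\rho_N(A)\,\rho_N(B)}-1\bigr|\le\varepsilon_{N,n}$.

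The reverse inequality $\psi_{\rho_N}(n)\ge\varepsilon_{N,n}$ is the crux. Given $\epsilon>0$, pick by \eqref{4.2} a $t^*\in I$ (which may be taken $<1$ in the limit) and a $B^*\in\mathcal B^\infty_n$ with $\rho_N(B^*)>0$ and $\bigl|\rho^{t^*}_N(B^*)/\rho_N(B^*)-1\bigr|>\varepsilon_{N,n}-\epsilon$. The aim is to exhibit a cylinder $A=I(a_1,\dots,a_k)$ on which $s^t_{N,k}$ is within $o(1)$ of $t^*$ for \emph{all} $t\in I$: each map $s\mapsto 1-N/(a_j+s)$ is a contraction with ratio $\le N/a_j^2\le 1/N\le\tfrac12$, so $t\mapsto s^t_{N,k}$ sends $I$ onto an interval of length $\le 2^{-k}$, and since the $s$-intervals attached to the rank-$k$ cylinders tile $[0,1)$, one can choose the digits so that this interval lies within $2^{-k}$ of $t^*$. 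Setting $B:=R_N^{-k}(B^*)\in\mathcal B^\infty_{k+n}$ and combining the mixture representation with the total-variation continuity of $t\mapsto\rho^t_N$ (immediate from the bounded continuous density \eqref{2.13}), one obtains $\rho_N(B\mid A)=\rho^{t^*}_N(B^*)+o(1)$ and $\rho_N(B)=\rho_N(B^*)$, hence $\frac{\rho_N(A\cap B)}{\rho_N(A)\,\rho_N(B)}\to\rho^{t^*}_N(B^*)/\rho_N(B^*)$ as $k\to\infty$; letting $k\to\infty$ and then $\epsilon\downarrow0$ gives $\psi_{\rho_N}(n)\ge\varepsilon_{N,n}$, and with the previous bound, \eqref{4.8}. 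I expect the main obstacle to be exactly this construction — steering $s^t_{N,k}$ to a prescribed target uniformly in the initial state $t$ — which rests on the contraction in \eqref{2.14} and the self-similar partition structure of the cylinders.
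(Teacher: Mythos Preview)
Your argument for \eqref{4.7} and for the upper bound $\psi_{\rho_N}(n)\le\varepsilon_{N,n}$ in \eqref{4.8} is correct and is essentially the paper's own argument: reduce $A$ to a single rank-$k$ cylinder, apply Corollary~\ref{cor.2.6} to turn $\rho_N^t(B\mid A)$ into $\rho_N^{s^t_{N,k}}(B')$, then use the density comparison $\varepsilon_{N,m}=\varepsilon'_{N,m}$ and the monotonicity of $(\varepsilon_{N,m})_m$. The mixture identity $\rho_N=\int_I\rho_N^t\,\mathrm d\rho_N(t)$ that you use for the upper bound in \eqref{4.8} is exactly what the paper invokes as well.

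Where you genuinely diverge is the \emph{lower bound} $\psi_{\rho_N}(n)\ge\varepsilon_{N,n}$. The paper does not approximate at all: it passes to the natural extension, observes that $\psi_{\rho_N}(n)=\psi_{\overline\rho_N}(n)$, and then rewrites the $\psi$-mixing supremum for the doubly infinite sequence as a supremum over product sets $\overline A=A\times I$, $\overline B=I\times B$ with $A\in R_N^{-n+1}(\mathcal B_I)$ and $B\in\mathcal B_I$; the identity $\overline\rho_N(A\times B)=\int_B\rho_N(\mathrm du)\,\rho_N^u(A)$ then returns the supremum to exactly $\varepsilon_{N,n}$, with no limiting step. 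Your route instead stays on the one-sided system and manufactures a cylinder $A=I(a_1,\dots,a_k)$ that forces $s^t_{N,k}$ uniformly close to a chosen $t^*$, using that each $u_{N,i}$ contracts by at most $N/i^2\le 1/N\le\tfrac12$ and that the images of these inverse branches tile $[0,1)$; you then close with the total-variation continuity of $t\mapsto\rho_N^t$ coming from the bounded continuous density~\eqref{2.13}. This is correct and more elementary, in that it avoids the natural-extension machinery entirely, but it trades the paper's exact identification for an $\epsilon$--$k$ approximation argument and relies on the extra (easy) ingredient of continuity in $t$. The paper's approach, by contrast, yields the equality in one stroke and makes transparent why the conditioning parameter $t$ and the ``past'' $\sigma$-algebra are interchangeable.
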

\begin{proof}
Let $(I\left(i^{(k)}\right)$ denote the cylinder $I(i_1, \ldots, i_k)$ for $k \in \mathbb{N}$.
It follows from (\ref{2.18}) that for any $t \in I$ we have
\begin{equation} \label{4.9}
  \varepsilon_{N,n} = \sup \left\lvert \frac{\rho^t_N\left(B \vert I\left(I\left(i^{(k)}\right)\right)\right)}{\rho_N(B)} - 1 \right\rvert
\end{equation}
where the supremum is taken over all $B \in \mathcal{B}^{\infty}_{k+n}$ with $\rho_N (B) > 0$, $i^{(k)} \in \Lambda^k$, and $k \in \mathbb{N}$.
For arbitrarily given $k, l, n \in \mathbb{N}_+$, $i^{(k)} \in \Lambda^k$ and $j^{(l)} \in \Lambda^l$ put
\[
A = I\left(i^{(k)} \right), \quad B = \left( (a_{k+n}, \ldots, a_{k+n+l-1}) = j^{(l)} \right)
\]
and note that $\rho^t_N(A) \rho^t_N(B) \neq 0$ for any $t \in I$. By (\ref{4.8}) we have
\begin{equation}\label{4.10}
  \left\lvert \rho^t_N (B\vert A) - \rho_N(B) \right\rvert \leq \varepsilon_{N,n} \rho_N(B)
\end{equation}
and
\begin{equation}\label{4.11}
  \left\lvert \rho^t_N (B) - \rho_N(B) \right\rvert \leq \varepsilon_{N,n+k} \rho_N(B).
\end{equation}
It follows from (\ref{4.9}) and (\ref{4.10}) that
\begin{equation*}
  \left\lvert \rho^t_N (B \vert A) - \rho^t_N(B) \right\rvert \leq (\varepsilon_{N,n}+\varepsilon_{N,n+k}) \rho_N(B)
\end{equation*}
whence
\begin{equation*}
  \left\lvert \rho^t_N (A \cap B) - \rho^t_N(A) \rho^t_N(B) \right\rvert \leq (\varepsilon_{N,1}+\varepsilon_{N,n+k}) \rho^t_N(A) \rho_N(B).
\end{equation*}
It follows from (\ref{4.10}) that
\[
\rho_N (B) \leq \frac{\rho^t_N (B)}{1+\varepsilon_{N,n+k}}.
\]
Since the sequence $\left( \varepsilon_{N,n} \right)_{n \in \mathbb{N}_+}$ is non-increasing, we have
\[
\frac{\varepsilon_{N,n}+\varepsilon_{N,n+k}}{1-\varepsilon_{N,n+k}} \leq \frac{\varepsilon_{N,n}+\varepsilon_{N,n+1}}{1-\varepsilon_{N,n+1}}, n \in \mathbb{N}_+,
\]
which completes the proof of (\ref{4.7}).

To prove (\ref{4.8}) we first note that putting $A \in I\left(i^{(k)}\right)$ for any given $k \in \mathbb{N}_+$ and $i^{(k)} \in \Lambda^k$ by (\ref{4.9}) we have
\begin{equation*}
  \left\lvert \rho^t_N (A \cap B) - \rho^t_N(A) \rho_N(B) \right\rvert \leq \varepsilon_{N,n} \rho^t_N(A) \rho_N(B)
\end{equation*}
for any $t \in I$, $B \in \mathcal{B}^{\infty}_{k+n}$, and $n \in \mathbb{N}_+$.
By integrating the above inequality over $t \in I$ with respect to $\rho_N$ and taking into account that
\[
\int_{I} \rho^t_N(E) \rho_N (\mathrm{d}t) = \rho_N (E), \quad E \in \mathcal{B}_I
\]
we get
\[
\psi_{\rho_N}(n) \leq \varepsilon_{N,n}, \quad n \in \mathbb{N}_+.
\]
To prove the converse inequality, remark that the $\psi$-mixing coefficients under the extended measure $\overline{\rho}_N$
of the doubly infinite sequence $(\overline{a}_l)_{l \in \mathbb{Z}}$ of extended incomplete quotients, are equal to the corresponding
$\psi$-mixing coefficients under $\rho_N$ of $({a}_n)_{n \in \mathbb{N}_+}$.
This is obvious by the very definitions of $(\overline{a}_l)_{l \in \mathbb{Z}}$ and $\psi$-mixing coefficients.
As $(\overline{a}_l)_{l \in \mathbb{Z}}$ is strict1y stationary under $\overline{\rho}_N$, we have
\[
\psi_{\rho_N}(n) = \psi_{\overline{\rho}_N}(n) =
\sup \left\lvert \frac{\overline{\rho}_N\left( \overline{A} \cap \overline{B}\right)}{\overline{\rho}_N\left( \overline{A} \right) \overline{\rho}_N\left( \overline{B}\right)} - 1 \right\rvert, \quad n \in \mathbb{N}_+,
\]
where the upper bound is taken over all $\overline{A} = \sigma (\overline{a}_n, \overline{a}_{n+1}, \ldots)$ and
$\overline{B} = \sigma (\overline{a}_0, \overline{a}_{-1}, \ldots)$ for which
$\overline{\rho}_N\left( \overline{A} \right) \overline{\rho}_N\left( \overline{B}\right) \neq 0$.
Clearly, $\overline{A} = A \times I$ and $\overline{B} = I \times B$, with
$A \in \mathcal{B}^{\infty}_n = R^{-n+1}_N \left( \mathcal{B}_I \right)$ and $B \in \mathcal{B}_I$. Then
\begin{equation}\label{4.12}
  \psi_{\rho_N}(n) = \sup_{\begin{array}{c}
        A \in R^{-n+1}_N \left( \mathcal{B}_I \right) \\
        B \in \mathcal{B}_I \\
        \rho_N(A)\rho_N(B) \neq 0
      \end{array}} \left\lvert \frac{\overline{\rho}_N(A \times B)}{\rho_N(A)\rho_N(B)} - 1 \right\rvert, \quad n \in \mathbb{N}_+.
\end{equation}
Now, it is easy to check that
\[
\overline{\rho}_N(A \times B) = \int_{A} \rho_N (\mathrm{d}t) \rho^t_N(B) = \int_{B} \rho_N (\mathrm{d}u) \rho^u_N(A)
\]
for any $A,B \in \mathcal{B}_I$.
It then follows from (\ref{4.12}) and the very definition of $\varepsilon_{N,n}$ that
\begin{equation*}
  \psi_{\rho_N}(n) \geq \sup_{\begin{array}{c}
        A \in R^{-n+1}_N \left( \mathcal{B}_I \right) \\
        u \in I, \rho_N(A) \neq 0
      \end{array}} \left\lvert \frac{{\rho}^u_N(A)}{\rho_N(A)} - 1 \right\rvert = \varepsilon_{N,n}, \quad n \in \mathbb{N}_+.
\end{equation*}
This completes the proof of (\ref{4.8}).
\end{proof}
\begin{corollary} \label{cor.4.3}
  The sequence $\left(a_n\right)_{n \in \mathbb{N}+}$ is $\psi$-mixing under $\rho_N$ and $\rho^t_N$, $t \in I$.
  For any $t \in I$ we have
  \[
  \psi_{\rho^t_N} (1) = \frac{\varepsilon_{N,1}+\varepsilon_{N,2}}{1-\varepsilon_{N,2}}
  \]
  and
\[
  \psi_{\rho^t_N} (n) = \frac{\left( \log \left( \frac{N}{N-1} \right) \right) K_N \cdot \delta_N \cdot c^{n-2}_N(1+c_N)}
  {1-\left( \log \left( \frac{N}{N-1} \right) \right) K_N \cdot \delta_N \cdot c^{n-1}_N}, \quad n \geq 2.
\]
In particular, for $N=2$, since $\varepsilon_{2,1} = 2 \log2 - 1 = 0.38629$ and $\varepsilon_{2,2} = \zeta(2) \log2 - 1 = 0.14018$
it follows that
\[
\psi_{\rho^t_2} (1) \leq 0.612302575.
\]
Also $\psi_{\rho_N} (1) = N \log \left( \frac{N}{N-1} \right) - 1$,
\begin{eqnarray*}
\psi_{\rho_N}(2) =
\max
\left\{
\begin{array}{ll}
\left\lvert\left(1+(N-1)^2 \zeta(2, N)\right)\log\left(\frac{N}{N-1} \right) -1\right\rvert, \\
\\
\left\lvert\left( 1+N^2 \zeta(2,N+1) - N \zeta(2,N) \right) \log\left(\frac{N}{N-1} \right) - 1\right\rvert
\end{array}
\right.
\end{eqnarray*}
and
\[
  \psi_{\rho_N} (n) \leq \left( \log \left( \frac{N}{N-1} \right) \right) K_N \cdot \delta_N \cdot c^{n-2}_N, \quad n \geq 3.
\]
In particular, for $N=2$
\[
\psi_{\rho_2}(1) = 2 \log2 - 1 = 0.38629, \quad \psi_{\rho_2}(2) = \zeta(2) \log2 - 1 = 0.14018.
\]
The doubly infinite sequence $(\overline{a}_l)_{l \in \mathbb{Z}}$ of extended incomplete quotients is $\psi$-mixing under the extended measure $(\overline{\rho}_N)$, and its $\psi$-mixing coefficients are equal to the corresponding $\psi$-mixing coefficients under $\rho_N$ of
$(a_n)_{n \in \mathbb{N}+}$.
\end{corollary}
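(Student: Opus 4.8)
The plan is to assemble the corollary from ingredients already in place: Theorem \ref{th.4.2} reduces every $\psi$-mixing coefficient appearing in the statement to the single sequence $(\varepsilon_{N,n})_{n \in \mathbb{N}_+}$, and Proposition \ref{prop.4.1} together with the explicit evaluations of $\varepsilon_{N,1}$ and $\varepsilon_{N,2}$ carried out in Section \ref{sect.4} controls that sequence. So the whole proof is a matter of substitution and bookkeeping.

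First I would record the $\psi$-mixing itself. By Proposition \ref{prop.4.1} the quantities $\varepsilon_{N,n}$ are dominated by a geometric sequence whose ratio is $0.348 < 1$ when $N=2$ and $c_N$ when $N \geq 3$; since $c_N = 2N-1-2\sqrt{N(N-1)}+\frac{N-1}{N(N+1)}$ is readily checked to lie in $(0,1)$ for every $N \geq 3$, one concludes $\varepsilon_{N,n} \to 0$. Feeding this into \eqref{4.7} yields $\psi_{\rho^t_N}(n) \to 0$ for each $t \in I$, and \eqref{4.8} yields $\psi_{\rho_N}(n) = \varepsilon_{N,n} \to 0$. This is precisely the asserted $\psi$-mixing of $(a_n)_{n \in \mathbb{N}_+}$ under $\rho_N$ and under each $\rho^t_N$.

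Next come the quantitative parts. The expression for $\psi_{\rho^t_N}(1)$ is just \eqref{4.7} at $n=1$; for $n \geq 2$ one inserts the Proposition \ref{prop.4.1} estimates for $\varepsilon_{N,n}$ and $\varepsilon_{N,n+1}$ into \eqref{4.7}, bounding the numerator by the sum of the two geometric estimates (this is where the factor $1+c_N$ comes from) and the denominator by the monotone estimate for $\varepsilon_{N,n+1}$. The numerical claim $\psi_{\rho^t_2}(1) \leq 0.612302575$ then follows by substituting the Section \ref{sect.4} values $\varepsilon_{2,1} = 2\log 2 - 1$ and $\varepsilon_{2,2} = \zeta(2)\log 2 - 1$ into the right-hand side of \eqref{4.7}. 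On the $\rho_N$ side, \eqref{4.8} turns the Section \ref{sect.4} computations of $\varepsilon_{N,1}$ and $\varepsilon_{N,2}$ directly into the stated formulas for $\psi_{\rho_N}(1)$ and $\psi_{\rho_N}(2)$, and turns the $n \geq 3$ bound of Proposition \ref{prop.4.1} into the stated upper bound for $\psi_{\rho_N}(n)$; specializing at $N=2$ reproduces the displayed numbers $2\log 2 - 1$ and $\zeta(2)\log 2 - 1$.

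Finally, the statement about the doubly infinite sequence $(\overline{a}_l)_{l \in \mathbb{Z}}$ needs no new work: the identity $\psi_{\overline{\rho}_N}(n) = \psi_{\rho_N}(n)$ was already established inside the proof of Theorem \ref{th.4.2}, from the strict stationarity of $(\overline{a}_l)_{l \in \mathbb{Z}}$ under $\overline{\rho}_N$ together with the product structure $\overline{A} = A \times I$, $\overline{B} = I \times B$ of the relevant events, so it is simply recorded here. Since everything is a substitution exercise there is no genuine obstacle; the only points that need care are verifying $0 < c_N < 1$ for all $N \geq 3$ (so that the geometric domination is truly contractive and the limits defining $\psi$-mixing vanish) and lining up the index shifts between $\varepsilon_{N,n}$ as estimated in Proposition \ref{prop.4.1} and $\psi_{\rho_N}(n) = \varepsilon_{N,n}$ as given by Theorem \ref{th.4.2}.
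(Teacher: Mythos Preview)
Your proposal is correct and follows essentially the same approach as the paper: the paper's own proof is a single line stating that the corollary follows from Proposition~\ref{prop.4.1} and Theorem~\ref{th.4.2}, with the final assertion about $(\overline{a}_l)_{l\in\mathbb{Z}}$ declared obvious from the definitions. You have simply unpacked these citations explicitly, which is fine.
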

The proof follows from Proposition \ref{prop.4.1} and Theorem \ref{th.4.2}.
As already noted, the last assertion is obvious by the very definitions of $(\overline{a}_l)_{l \in \mathbb{Z}}$
and $\psi$-mixing coefficients.

\appendix

\section*{Appendix A. Proofs of some results from Section 3}
\addcontentsline{toc}{section}{Appendix}

\noindent \textbf{Proof of (\ref{3.17})}
We have
\begin{eqnarray*}
  G^t_{N,1} \left( 1-\frac{N}{i+1+\theta} \right) &=&
   \left\{
    \begin{array}{ll}
             \displaystyle F^t_{N,0} \left(1-\frac{N}{i+1} \right), & {0 \leq \theta \leq t}\\
             \displaystyle F^t_{N,0} \left(1-\frac{N}{i+1} \right) + P_{N,i+1}(t), & {\theta > t}
    \end{array}
    \right.\\
   &\leq& \frac{i+1-N}{i+t} + \frac{t+N-1}{(i+t)(i+1+t)} \leq \frac{1}{N} + \frac{N-1}{N(N+1)} = \frac{2}{N+1}
\end{eqnarray*}
and
\begin{eqnarray*}
  G_N \left( 1- \frac{N}{i+1+\theta} \right)  &=& \left(\log\left(\frac{N}{N-1} \right) \right)^{-1} \log\left(\frac{N(i+\theta)}{(N-1)(i+1+\theta)}\right) \\
  &\geq& \left(\log\left(\frac{N}{N-1} \right) \right)^{-1} \log\left(\frac{N^2}{N^2-1}\right).
\end{eqnarray*}

\end{document}